\newcommand{\mN}{\mathbb{N}}
\newcommand{\mZ}{\mathbb{Z}}
\newcommand{\mR}{\mathbb{R}}
\newcommand{\mC}{\mathbb{C}}
\newcommand{\Text}[1]{\text{\textnormal{#1}}}
\newcommand{\I}{\Text{i}}
\newcommand{\E}{\Text{e}}
\newcommand{\MTEXT}[1]{\;\;\;\;\;\text{#1}\;\;\;\;\;}
\newcommand{\D}{\text{d}}
\newcommand{\CpR}[2]{\mathscr{C}^{#1}(\mathbb{R}^{#2})}
\newcommand{\conj}{\overline}
\newcommand{\Sdash}[1]{\mathscr{S}'(\mR^{#1})}
\newcommand{\Sdashc}[1]{\mathscr{S}_{\Text{c}}'(\mR^{#1})}
\newcommand{\DF}{\mathcal{D}_d^{\Text{F}}}
\newcommand{\tDF}{\mathcal{ D}^{\Text{F}}}
\newcommand{\FF}{\mathcal{F}}
\newcommand{\RR}{\mathcal{R}}
\newcommand{\wF}{w_{\Text{F}}}
\newcommand{\adj}[1]{#1^\ast}
\newcommand{\mSn}[1]{\mathbb{S}^{#1}}
\newcommand{\thmref}[1]{Theorem~\ref{#1}}
\newcommand{\lemref}[1]{Lemma~\ref{#1}}
\newcommand{\cref}[1]{Corollary~\ref{#1}}
\newcommand{\Cref}[1]{Corollary~\ref{#1}}
\newtheorem{theorem}{Theorem}
\newtheorem{lemma}[theorem]{Lemma}
\newtheorem{cor}[theorem]{Corollary}
\begin{document}

\title[Uniqueness of propagation-based phase contrast imaging]{A uniqueness result for propagation-based phase contrast imaging from a single measurement}

\author{Simon Maretzke  $^1$$^2$}

\address{$^1$ Institute for X-Ray Physics, University of G\"ottingen, Friedrich-Hund-Platz 1, D-37077 G\"ottingen, Germany \\
 $^2$ Institute for Numerical and Applied Mathematics, University of G\"ottingen, Lotzestr 16-18, D-37083 G\"ottingen, Germany}

\ead{simon.maretzke@googlemail.com}
\vspace{10pt}

\begin{abstract}
Phase contrast imaging seeks to reconstruct the complex refractive index of an unknown sample from scattering intensities, measured for example under illumination with coherent X-rays. By incorporating refraction, 
this method yields improved contrast compared to purely absorption-based radiography  but involves a phase retrieval problem which, in general, allows for ambiguous reconstructions. In this paper, we show uniqueness of propagation-based phase contrast imaging for compactly supported objects in the near-field regime, based on a description by the projection- and paraxial approximations. In this setting, propagation is governed by the Fresnel propagator and the unscattered part of the illumination function provides a known reference wave at the detector which facilitates phase reconstruction. The uniqueness theorem is derived using the theory of entire functions. Unlike previous results based on exact solution formulae, it is valid for arbitrary complex objects and requires intensity measurements only at a single detector distance and illumination wavelength. We also deduce a uniqueness criterion for phase contrast tomography, which may be applied to resolve the three-dimensional structure of micro- and nano-scale samples. Moreover, our results may have some significance to electronic imaging methods due to the equivalence of paraxial wave propagation and Schr\"odinger's equation.
\end{abstract}

\pacs{02.30.Nw, 02.30.Zz, 42.30.Rx, 42.30.Wb}
\ams{78A46, 30D20}

\vspace{2pc}
\noindent{\it Keywords}: uniqueness, phase retrieval, X-ray scattering, phase contrast imaging, Fresnel propagator, entire functions

\submitto{\IP}
%
\maketitle
%
%

\section{Introduction} \label{S1}

The advent of coherent X-ray sources, such as synchrotrons and - more recently - free-electron-lasers, has allowed to extend the scope of radiography to quasi-transparent specimen by phase-sensitive imaging techniques \cite{Nugent2010coherent}. Examples include micro- or nano-scale objects composed mainly of light elements, most prominently biological cells \cite{Bartels2012,Miao2003EColiFarfield,Thibault2006YeastCell} but also organic or ceramic foams \cite{Barty2008ceramicfoam,Cloetens1999}. Phase contrast imaging seeks to reconstruct the spatially varying complex refractive index $n = 1 - \delta + \I \beta$ of such samples from measurements of scattered wave fields. In particular, the approach takes into account the real component $\delta$ governing the refractive phase shifts that are imprinted to transmitted radiation. For the specimens and wavelengths in question, $\delta$ is typically up to three orders of magnitude larger than the absorptive part $\beta$ \cite{Cloetens1999,Henke1993TypicalDeltaBeta,Mayo2002quantitative}. Consequently, solely absorption-based approaches are bound to result in poor contrast. 
On the other hand, refractive information is encoded entirely in the phase of the transmitted wave fields which cannot be observed directly by common CCD detectors due to their physical limitation to wave \emph{intensities}  \cite{PaganinXRay}. The required phase sensitivity can be achieved either by interferometric techniques \cite{Bonse1965interferometric,Momose1995interferometric,Wilkins1996interferometric}, or by measuring the \emph{propagated} wave field on a distant detector rather than close to the exit-surface of the sample \cite{Gureyev1999noninterferometric,Nugent1996Propagationbased,Paganin1998noninterferometric,Pogany1997noninterferometric}. In the latter setting, which is studied in this work, diffraction encodes the phase information into observable intensities. 
This naturally raises the question whether the encoding is unambiguous, i.e.\ whether the phase can be recovered uniquely from the data.

In the \emph{far-field} limit of large distances between sample and detector where the propagation essentially reduces to a Fourier transform \cite{PaganinXRay}, this problem has been subject to extensive analytical studies based on the complex analysis approach of Akutowicz and Walther \cite{Akutowicz1956I,Akutowicz1957II,Walther1963}. See for instance \cite{Klibanov1995,Millane1990} for reviews. The principal result for the reconstruction of a compactly supported function from Fourier intensity data is that solutions to the phase retrieval problem are in general highly non-unique in $\mR$. {On the contrary, non-trivial ambiguities are ``pathologically rare'' in higher dimensions \cite{Barakat1984,Fienup19782DPhaseRetrFeasible}, occurring only for objects within a set of measure zero \cite{BruckSodin1979PhaseAmbiguity2D,Hayes1982Reducible}. {Absolute uniqueness can be shown to hold under additional a priori assumptions on the object, such as vanishing absorption, suitable regularity and symmetric- or symmetry-breaking structure \cite{Klibanov20062DUniquePurePhase,Klibanov20141Dunique}.} 
However, \emph{ab initio} reconstructions require iteratively updated support estimates \cite{Fienup1986algorithm,Marchesini2003PhysRevB} in order to overcome the ``trivial'' ambiguities induced by the invariance of the data under translations and reflections of the object.} Far-field phase contrast, better known as coherent diffractive imaging, has been successfully applied to 2D- and 3D-imaging of quasi-transparent specimen ($\beta = 0$) \cite{MiaoNature1999CDIFirstExp,Miao2003EColiFarfield} and single-material objects ($\beta \propto \delta$) \cite{Chapman2006,Marchesini2003GoldSpFarfield}.

On the contrary, this work is concerned with phase contrast imaging in the \emph{near-field regime}, also called the \emph{holographic-} or \emph{Fresnel regime}, characterized by moderate detector distances. Propagation in this regime is described by the Fresnel propagator. The detected wave field is composed of the incident illumination beam plus a perturbation induced by the scattering on the object. In the far-field case, the former component usually gives non-negligible contributions only around the center of the diffraction pattern, often blocked by a beam stop in order not to damage the detector. In the near-field regime, in contrast, the unscattered part of the probing beam typically manifests itself in a bulk background intensity at the detector, representing a \emph{global} reference wave.
As discussed in \cite{Millane1990} for the example of speckle holography, the presence of such a known reference signal in the data may eliminate phase retrieval ambiguities.
 
{The near-field phase retrieval problem has been proven to be uniquely solvable for general compactly supported objects, given at least \emph{two} independent intensity patterns recorded at different detector distances or incident wavelengths \cite{Jonas2004TwoMeasUniquePhaseRetr}. To the best of our knowledge, no equally general analogue has ever been derived for a \emph{single} measurement setting.} Previous results \cite{Klibanov1986FresnelUnique} only guarantee unique recovery of either the phase or the amplitude of a complex-valued signal if the other part is known. However, the latter study does not exploit the perturbational algebraic structure of near-field data arising from the superposition of the unknown object with the reference probe wave field.

Exact solution formulae for near-field phase contrast imaging from a single intensity measurement are restricted to single-material samples. Additionally, these approaches assume small propagation distances to approximate the transport-of-intensity equation \cite{Paganin2002simultaneous,Teague1983TIE} or weak absorption and slowly varying phase shifts \cite{Turner2004FormulaWeakAbsSlowlyVarPhase}. For general objects, $\delta$ and $\beta$ have to be determined independently.
Referring to the ``phase vortex'' counter-example \cite{Nugent2007TwoPlanesPhaseVortex}, it is commonly argued that two real-valued intensity patterns are not only sufficient but also necessary for uniqueness of such a reconstruction \cite{Burvall2011TwoPlanes}. Yet, we emphasize that the studied vortical wave fields  may never arise from scattering on compact samples. Moreover, recent numerical results \cite{Ruhlandt2014} for phase contrast tomography suggest that intensity data from a single detector distance may be sufficient for unique recovery of $\delta$ and $\beta$.
This work indeed aims to disprove the widely believed existence of ambiguities in single-distance near-field phase contrast imaging - at least for compactly supported objects illuminated by plane waves or Gaussian beams.
Our central uniqueness result, based on growth estimates for entire functions, reads as follows:
\vspace{1em}\begin{theorem} \label{thm:1}
 Let $\Sdash{m} \supset \Sdashc{m}$ denote the tempered (and compactly supported) distributions and $\FF: \Sdash{m} \to \Sdash{m}$ the Fourier transform. For $w \in \CpR{\infty}{m}$ everywhere nonzero, $\alpha \in \mC \setminus \mR$ and $\check p \in \Sdashc{m} \setminus \{ 0 \}$ define forward operators $F, F_{\Text{lin}}: \Sdashc{m} \to \CpR{\infty}{m}$ by
  \numparts \label{eq:defOps}
 \begin{eqnarray}
  &F(h) &= | \FF(\check p )\exp(\alpha ( \bi \cdot )^2) + \FF(w \cdot h) |^2 \label{eq:defF} \\ 
  &F_{\Text{lin}}(h) &= | \FF(\check p )\exp(\alpha ( \bi \cdot )^2) + \FF(w \cdot h) |^2 - | \FF(w \cdot h) |^2 \label{eq:defFlin} 
 \end{eqnarray}
 \endnumparts
 Then $F$ and $F_{\Text{lin}}$ are well-defined and injective. Moreover, any $ h\in \Sdashc{m}$ is uniquely determined by data $F(h)_{|U}$ or $F_{\Text{lin}}(h)_{|U}$ restricted to an arbitrary open set $U \subset \mR^m$.
\end{theorem}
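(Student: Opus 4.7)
My plan is to use the Paley--Wiener theorem to lift the identity $F(h_1) = F(h_2)$ (or its $F_{\text{lin}}$ analogue) from an open $U \subset \mR^m$ to an identity of entire functions on $\mC^m$, and then to exploit the super-exponential growth of $\exp(\alpha\,z\cdot z)$---which is genuine precisely because $\alpha \notin \mR$---to force the difference $\Delta := H_1 - H_2$ to vanish. Since $w\,h_j \in \Sdashc{m}$ (as $w$ is smooth and $h_j$ has compact support), Paley--Wiener gives that $H_j(z) := \FF(w h_j)(z)$ and $\FF(\check p)(z)$ extend to entire functions of exponential type on $\mC^m$, and $P(z) := \FF(\check p)(z)\exp(\alpha\,z\cdot z)$ is entire of order $2$. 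Introducing the Schwarz reflection $f^*(z) := \overline{f(\bar z)}$, one has $F(h_j)|_{\mR^m} = (P + H_j)(P + H_j)^*|_{\mR^m}$. Because $F(h_j)$ and $F_{\text{lin}}(h_j)$ are real-analytic on $\mR^m$, equality on $U$ propagates first to all of $\mR^m$ and then, by uniqueness of analytic continuation, to the entire identity $(P + H_1)(P + H_1)^* = (P + H_2)(P + H_2)^*$ on $\mC^m$. Rearranging in $\Delta$ yields
\begin{equation*}
  P\,\Delta^* + P^*\,\Delta \;=\; -\bigl(H_1 H_1^* - H_2 H_2^*\bigr),
\end{equation*}
whose right-hand side is entire of order $\leq 1$ (a sum of products of order-$1$ functions); for $F_{\text{lin}}$ the corresponding identity is the homogeneous $P\,\Delta^* + P^*\,\Delta = 0$.

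Next I would reduce to one complex variable by restricting to a line $z = \xi_0 + \zeta v$ with $\xi_0, v \in \mR^m$, $|v|=1$. Along this line $P$ factors as $\tilde p(\zeta)\exp(\alpha \zeta^2)$ for an entire $\tilde p$ of order $\leq 1$ absorbing $\FF(\check p)$ and the cross terms of $(\xi_0 + \zeta v)\cdot(\xi_0 + \zeta v)$, while $\delta(\zeta) := \Delta(\xi_0 + \zeta v)$ and the restricted right-hand side $q(\zeta)$ are entire of order $\leq 1$ in $\zeta$. Dividing the displayed identity by $\tilde p\,\delta^*\,\exp(\alpha\zeta^2)$ gives the meromorphic identity
\begin{equation*}
  1 \;+\; \frac{\tilde p^*\,\delta}{\tilde p\,\delta^*}\,\exp\bigl((\bar\alpha-\alpha)\zeta^2\bigr) \;=\; \frac{q\,\exp(-\alpha\zeta^2)}{\tilde p\,\delta^*}.
\end{equation*}
The pivotal observation is that, since $\alpha \notin \mR$, there exists a ray $\zeta = r e^{i\theta_0}$ on which both $|\exp((\bar\alpha-\alpha)\zeta^2)|$ and $|\exp(-\alpha\zeta^2)|$ decay like $e^{-c r^2}$ for some $c > 0$; for instance $\theta_0 = 3\pi/4$ works when $\Rey(\alpha) = 0$ and $\text{Im}(\alpha) > 0$, and the general case follows by solving the two sign conditions explicitly for $\theta_0$.

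I would then invoke the minimum-modulus theorem for entire functions of finite order: for any $\eta > 0$, $\log|\tilde p(\zeta)\delta^*(\zeta)| \geq -K r^{1+\eta}$ holds on the chosen ray for $r$ outside an exceptional set of finite logarithmic measure. Letting such $r \to \infty$, both the second summand on the left and the whole right-hand side of the displayed identity are bounded by $\exp(K r^{1+\eta} - c r^2) \to 0$, contradicting the constant $1$ on the left unless $\delta \equiv 0$. Picking $\xi_0 \in \mR^m$ with $\Delta(\xi_0) \neq 0$ (possible whenever $\Delta \not\equiv 0$, since an entire function vanishing on $\mR^m$ must vanish identically) makes $\delta \not\equiv 0$, so we obtain $\Delta \equiv 0$ on $\mC^m$; hence $\FF(w(h_1 - h_2)) = 0$, which gives $w(h_1 - h_2) = 0$ and finally $h_1 = h_2$ by the nowhere-vanishing of $w$.

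I expect the growth/indicator step to be the main obstacle: it is where $\alpha \notin \mR$ is genuinely used (to produce the ray of simultaneous decay of both Gaussian factors) and it requires careful bookkeeping of the minimum-modulus exceptional radii together with the exponential-type upper bounds on $\tilde p^*, \delta, q$ that need to be dominated by $e^{c r^2}$. The well-definedness of $F, F_{\text{lin}}$ into $\CpR{\infty}{m}$ and the real-analytic continuation from $U$ to $\mR^m$ are comparatively routine.
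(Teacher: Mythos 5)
Your proposal follows essentially the same route as the paper: Paley--Wiener--Schwartz to obtain entire extensions, analytic continuation from $U$, expansion of the squared modulus into a sum of order-$\leq 1$ entire coefficients times Gaussian factors $\exp(\alpha_j\zeta^2)$ with $\alpha_j\in\{\alpha,\conj\alpha,0\}$ (pairwise distinct precisely because $\alpha\notin\mR$), restriction to complex lines, and a growth comparison along a well-chosen direction combined with a minimum-modulus lower bound for entire functions of order $\leq 1$. The paper packages the last step as a standalone ``linear independence of Fresnel factors'' lemma, selecting the Gaussian of maximal $|\alpha_j|$ and a diagonal where it is real and growing, and invoking $\limsup_r m_f(r)M_f(r)^{1+\varepsilon}>0$; your variant of dividing through and finding a ray where the other two Gaussians simultaneously decay is the same idea in different clothing.

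One small omission: you apply the minimum-modulus theorem to $\tilde p\,\delta^*$ after only arranging $\delta\not\equiv 0$, but you also need $\tilde p\not\equiv 0$ on the chosen line --- an entire function on $\mC^m$ that is not identically zero can still vanish on a complex line. This is where the hypothesis $\check p\neq 0$ must actually be used (your write-up never invokes it, yet without it the statement is false, since $F_{\Text{lin}}$ would be identically zero). The fix is immediate: choose $\xi_0$ in the intersection of the sets where $\Delta\neq 0$ and $\FF(\check p)\neq 0$, both of which are dense open subsets of $\mR^m$ when the respective functions are nonzero; the paper avoids the issue by first concluding $\FF(\check p)^*\cdot\FF(w\cdot(h-\tilde h))=0$ for all lines and only then using that $\FF(\check p)^*$ is almost everywhere nonzero.
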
\vspace{1em}
{Physically, the first summand in \eref{eq:defF} is associated with the unscattered probe beam, whereas the second gives the scattering perturbation $h$ induced by the specimen. The operator $F_{\Text{lin}}$ represents a linearization of $F$ valid for small $h$, i.e.\ for weak objects in a suitable sense, an assumption which is underlying to commonly used near-field phase retrieval techniques based on the so-called contrast transfer function \cite{Guigay1977CTF,Pogany1997noninterferometric,Cloetens1999,Mayo2002quantitative,Krenkel2014BCAandCTF}. Details on the physical problem of near-field phase contrast imaging and how it matches the framework of \thmref{thm:1} are discussed in \sref{S2}.} \Sref{S3} gives a recap of the theory of entire functions as a preparation for the proofs of the main results in \sref{S4}.

\section{Physical problem} \label{S2}

\Fref{figure1} shows an idealized setup for propagation-based phase contrast imaging with X-rays: incident coherent electromagnetic waves of wavenumber $k$ interact with an unknown object of thickness $L$ in the beam line, leading to a slightly perturbed wave field at the exit-surface $E_0$. We parametrize the object by its refractive index $n = 1- \delta + \I \beta$ where $\delta$ and $\beta$, governing refraction are absorption, respectively, are real-valued and compactly supported. The intensity of the scattered radiation is measured in the detector plane $E_d$ at finite distance $d>0$ to the object. Although the physical setting in \Fref{figure1} is three-dimensional, we consider the more general case of $m \in \mN$ lateral dimensions, denoted by $\bi x$, plus the axial $z$-direction. 
 	\begin{figure}[hbt!]
 	 \centering
 	 \includegraphics[width=.7\textwidth]{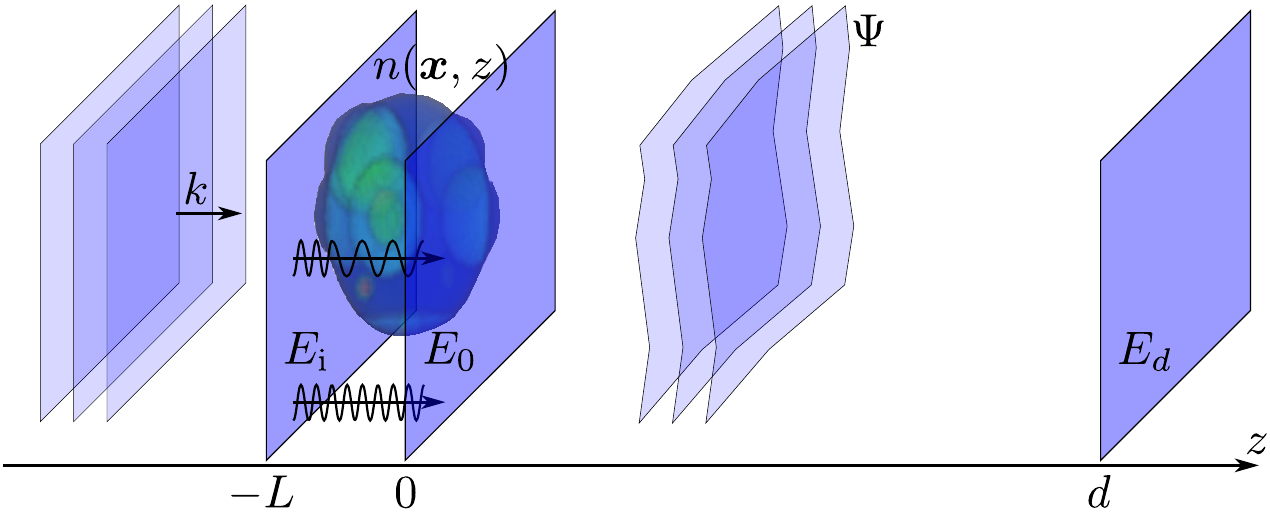}
 	 \caption{Idealized setup for propagation-based phase contrast imaging: incident coherent radiation, visualized by plane wave fronts, is scattered on a compactly supported object. The resulting phase shifts and absorption manifest in the intensity profiles recorded at $E_d$ (courtesy of Aike Ruhlandt, personal communication). \label{figure1}}
 	\end{figure}
 	
{It is well known that the cartesian components of a monochromatic electromagnetic wave in a medium of refractive index $n$ can be described by a single complex-valued time-independent field $\Psi$, governed by the \emph{Helmholtz equation} \cite{PaganinXRay}
\begin{equation}
  \Delta \Psi + k^2 n^2 \Psi=0. \label{eq:Helmholtz}
\end{equation}
The model of phase contrast imaging considered in this work is based on the \emph{paraxial-} and the \emph{projection approximations} \cite{PaganinXRay}. We discuss these briefly here, referring to \cite{Jonas2004TwoMeasUniquePhaseRetr} for detailed error estimates.

The paraxial approximation requires that $\Psi$ is of the form $\Psi(\bi x, z) = \E^{\I k z} \tilde \Psi(\bi x, z)$ where the envelope $\tilde \Psi$ is slowly varying on axial lengthscales $1/k$. Substituting this ansatz into \eref{eq:Helmholtz}, the contribution $\partial_z^2 \tilde \Psi$ may then be neglected against higher orders in $k$, yielding the paraxial Helmholtz equation for the evolution of  $\tilde \Psi$ \cite{PaganinXRay}:
\begin{equation}
  \left(2 \I k \partial_z + \Delta_\perp - 2k^2 (\delta - \I \beta)   \right) \tilde \Psi=0. \label{eq:ParHelmholtz}
\end{equation}
Here, $\Delta_\perp$ denotes the Laplacian in the lateral coordinate $\bi x$ and quadratic terms in $\delta, \beta$ have been neglected as these decrements of the refractive index are typically several orders of magnitude smaller than one in the considered X-ray regime.

The projection approximation corresponds to a description of the scattering interaction by geometrical optics: within the sample domain $z \in [-L;0]$ in \Fref{figure1} containing the support of $\delta, \beta$, diffraction of traversing X-rays is neglected by omitting the lateral coupling term $\Delta_\perp \tilde \Psi$ in \eref{eq:ParHelmholtz}. Although this approximation may seem crude for visible light, it is typically very accurate for the large wavenumbers $k$ and comparably thin objects encountered in X-ray radiography. Under the assumption $\Delta_\perp \tilde \Psi = 0$, \eref{eq:ParHelmholtz} reduces to the ordinary differential equation $\left( \partial_z  + \I k (\delta - \I \beta)   \right) \tilde \Psi=0$. Solving this equation for $z \in [-L;0]$, we obtain the following approximation for the scattered wave field $\Psi_z := \Psi(\cdot, z) = \E^{\I k z} \tilde \Psi(\cdot,z)$ at the exit-surface $z = 0$:
 \begin{equation}
  \Psi_0(\bi x )   = \underbrace{\tilde{\Psi}(\bi x, -L )}_{=: P(\bi x)} \cdot  \underbrace{\exp\left(  -  \I k  \int_{-L}^0 (\delta(\bi x ,z) - \I \beta(\bi x ,z)) \; \rmd z  \right)}_{=: O(\bi x)}  \label{eq:1}.
 \end{equation}
 $P $ denotes the \emph{probe function} describing the incident illumination wave field and $O$ is the \emph{object transmission function} encoding the sample structure \cite{Thibault2009ProbeObjectFct}. The exponential of the line integrals over $\delta$ and $\beta$ describes an accumulation of phase shifts and attenuation, respectively, of the X-rays traversing the object.
 
 Between the sample's exit surface $E_0$ and the detector plane $E_d$ in \Fref{figure1}, the scattered wave field propagates in vacuum, characterized by a constant refractive index $n=1$. In this stage of the considered imaging system, diffractive effects are retained in our model by solving \eref{eq:ParHelmholtz} analytically for $\delta = \beta = 0$. This yields a relation between the wave field $\Psi_d$ at the detector and $\Psi_0$ described by the \emph{Fresnel propagator} $\DF$ \cite{PaganinXRay}:
  \begin{eqnarray}
   \Psi_d(\bi x ) =  \DF(\Psi_0)(\bi x &) := &\E^{\I (k d - m \pi /4) }  \left( \frac{ k  }{ 2 \pi d } \right)^{m/2}  \exp \left( \frac{\I k \bi x ^2}{2 d} \right) \nonumber \\ 
   &\bi\cdot \int_{\mR^m} &\Psi_0 (\bi{x'} )  \exp \left( \frac{\I k \bi{x'} ^2}{2 d}  \right) \exp \left( -\frac{\I k \bi x \bi  \cdot \bi{x'} }{ d}    \right)\; \rmd \bi{x'} .  \label{eq:2}
 \end{eqnarray}
  
Physically, the detection of the scattered radiation in the plane $E_d$ is limited to recording wave \emph{intensities} represented by the squared modulus of the propagated wave field $\Psi_d$. This means that the phase information is lost in the measurement process, a defect known as the phase problem of optics. Combining \eref{eq:1} and \eref{eq:NonDimFresnel}, we find that the observed data in our model of propagation-based phase contrast imaging is given by
\begin{equation}
 I_d = |\Psi_d|^2 =  |  \DF ( P \cdot O) |^2. \label{eq:Intensities}
\end{equation}
The principal imaging problem considered in this work lies in reconstructing the object transmission function $O$ from intensities $I_d$ by solving the \emph{phase retrieval problem} \eref{eq:Intensities}. We show that \thmref{thm:1} guarantees uniqueness of such a reconstruction if the probe function $P$ is known and of suitable form.}

To this end, we introduce dimensionless coordinates $\bxi := (k/d)^{1 / 2} \bi{x}$, $\bxi' := (k/d)^{1 / 2}\bi{x}'$ and corresponding fields $\wF(\bxi)  := \exp( \I  \bxi^2 /2)$, $\psi_z(\bxi) := \Psi_z(\bi x)$ in \eref{eq:2}. This yields
 \begin{equation}
  \psi_d = \tDF( \psi_0 ) := \gamma \E^{\I k d} \wF \cdot \FF( \wF \cdot \psi_0) \label{eq:NonDimFresnel}
 \end{equation}
   where $\FF$ denotes the $m$-dimensional Fourier transform and $\gamma := \E^{-\I m \pi /4  } $. Defining $I(\bxi) := I_d(\bi x)$, $p(\bxi) := P(\bi x)$, $o(\bxi) := p(\bi x)$, $h:= p\cdot ( o -1 )$ and using \eref{eq:NonDimFresnel}, \eref{eq:Intensities} becomes
   \begin{equation}
 I = |  \tDF ( p \cdot o) |^2 = |  \tDF ( p ) +  \tDF ( h ) |^2  =    \left|  \frac{\E^{-\I k d}}{\gamma \wF} \tDF ( p ) + \FF( \wF \cdot h ) \right|^2. \label{eq:IntensitiesNoDim}
\end{equation}
Physically, $h$ describes the perturbation of the X-ray wave field induced by the scatterer. {By our assumption of compactly supported $\delta$ and $\beta$, \eref{eq:1} implies that $o(\bxi) = 1$ for all $\bxi \in \mR^m \setminus \Omega$ outside a bounded domain $\Omega \subset \mR^m$ so that $h$ has compact support.}
Moreover, $O$ can be recovered uniquely from $h$ if $p$ is known and everywhere nonzero. Hence, feasibility of the considered imaging problem reduces to the question whether any compactly supported $h$ is uniquely determined by data of the form \eref{eq:IntensitiesNoDim}.
 	\begin{figure}[hbt!]
 	 \centering
 	 \includegraphics[width=.24\textwidth]{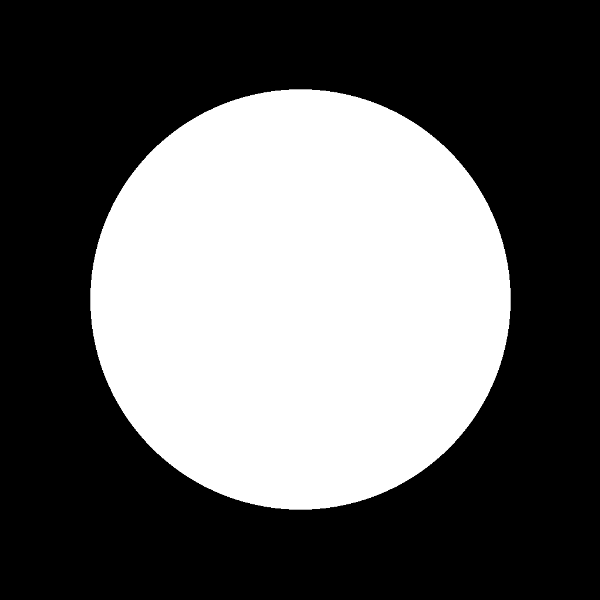}\hfill \includegraphics[width=.24\textwidth]{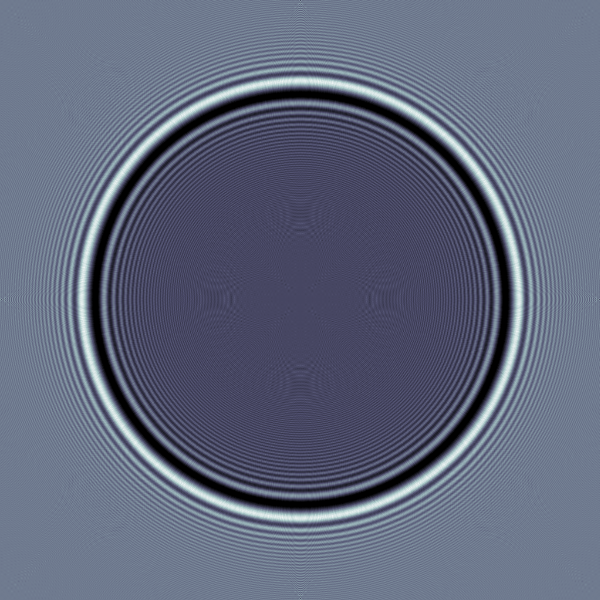} \hfill \includegraphics[width=.24\textwidth]{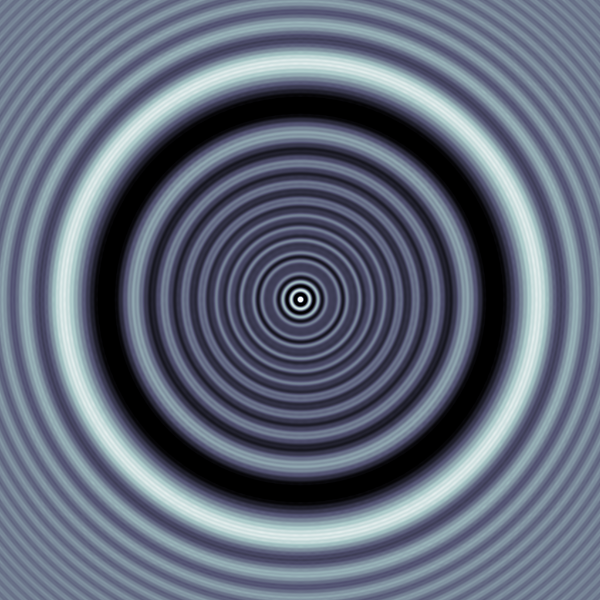} \hfill \includegraphics[width=.24\textwidth]{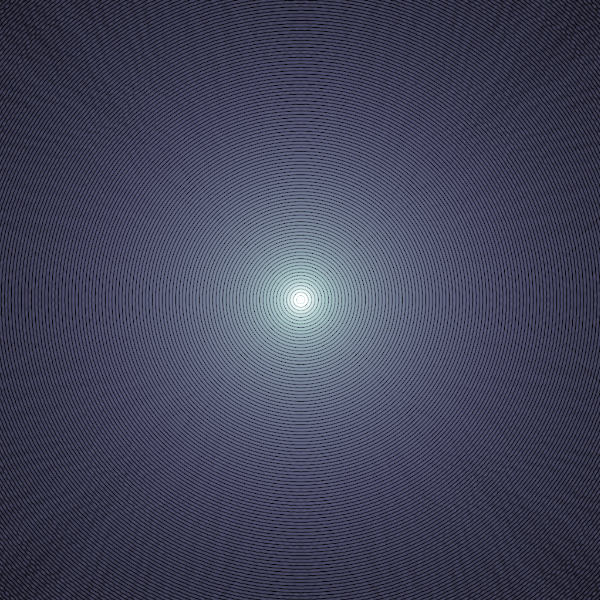}
 	 \caption{Simulated near- and far-field phase contrast for a disc-shaped object transmission function $O$ of radius $R$ (left figure, $O = \exp(-\I - \frac 1 {10} )$ inside the disc), corresponding to scattering on a refracting and weakly absorbing specimen. Center figures (linear color scale) show intensity data simulated according to \eref{eq:Intensities} for $P =1$ and $2\pi d/(kR^2) = 10^{-4}$  and $10^{-3}$, respectively, representing the near-field imaging regime considered in this work. The rightmost plot (log-scale) shows the frequently studied case of Fourier intensities $| \FF(P \cdot O) |^2$, valid in the far-field limit $d \to \infty$. \label{FarfieldNearfieldDemo}}
 	\end{figure}

As discussed in the introduction, a frequently studied phase retrieval problem is the reconstruction of a compactly supported signal $f$ from the squared modulus of its Fourier transform $|\FF(f)|^2$. Note that the intensity data defined in \eref{eq:IntensitiesNoDim} is quite different from this well-known setting in that the object dependent part $\FF( \wF \cdot h )$ is superimposed with the  \emph{reference signal} $\frac{\E^{-\I k d}}{\gamma \wF} \tDF ( p )$ which is essentially given by the propagated probe. {It is the presence of this reference term in \eref{eq:IntensitiesNoDim} which will be shown to permit application of \thmref{thm:1} for suitable (non-compactly supported) fields $p$ and thus yield uniqueness of the considered imaging problem. On the contrary, the present work's approach is inapplicable to the classical phase retrieval problem, studied for instance in \cite{Klibanov1986FresnelUnique,Klibanov20062DUniquePurePhase,Klibanov20141Dunique,Klibanov1995}, where the \emph{total} signal $f \approx p\cdot o$ (and not just the perturbation $h$ induced by the scatterer) is assumed to be compactly supported.}

Phase retrieval from plain Fourier data, however, arises naturally as the far-field limit of \eref{eq:IntensitiesNoDim}: if the exit wave $\Psi_0 = P\cdot O$ is non-negligible only within a domain of diameter $\ll (d/k)^{\frac 1 2}$, then the approximation $\wF \approx 1$ is justified so that $|  \tDF ( p \cdot o) |^2 \approx  |  \FF( p \cdot o ) |^2$
by \eref{eq:NonDimFresnel}.
The qualitative differences between near- and far-field imaging are illustrated by the numerical examples plotted in \Fref{FarfieldNearfieldDemo}. Due to the moderately small detector distances in the near-field setting, the measured intensities typically show wavy fringes arising from diffractive propagation effects along with traces of direct contrast, i.e.\ real-space representations of bulk object features such as the disc-shaped shadow in the mid-left and mid-right images.
On the contrary, far-field data represented by the rightmost image in \Fref{FarfieldNearfieldDemo} is by definition given by Fourier space information only, corresponding to an entirely diffractive encoding of the sample structure.
%

Mathematically, the difference between near- and far-field governed by the Fresnel propagator $\tDF$ and the Fourier transform $\FF$, respectively, is perhaps most prominent for illumination by plane waves, described by a constant probe field $p   = p_0 \in \mC \setminus \{0\}$: while $\FF$ maps constants to scaled Dirac-deltas, meaning that the propagated probe merely provides a strongly localized reference wave in the far-field, a laterally uniform background signal is obtained in the Fresnel regime since $ \E^{-\I k d} \tDF(p_0) = p_0 $. By \eref{eq:IntensitiesNoDim}, the near-field intensity data under plane wave illumination is thus of the form
\begin{equation}
I(\bxi)  =  \left| \frac{ p_0 }{ \gamma } \exp\left( - \frac{\I  \bxi^2}  2 \right) +  \FF(\wF \cdot h)(\bxi) \right|^2. \label{eq:4} 
\end{equation}
The forward map $F: h \mapsto I$ defined by \eref{eq:4} matches the assumptions of \thmref{thm:1} with $\check p =(2\pi)^{m/2} (p_0 / \gamma) \delta_0$ and $\alpha = - \I / 2$ where $\delta_0$ denotes the Dirac-delta distribution centered at $0$. Linearizing $F$ at $h = 0$ yields an operator $F_{\Text{lin}}$ of the form \eref{eq:defFlin}, representing the physically relevant limit of weakly scattering samples.

A more realistic class of probe functions is given by Gaussian beams propagating in axial direction. These constitute analytical solutions to the paraxial Helmholtz equation \eref{eq:ParHelmholtz} in vacuum, such that the lateral intensity profile is everywhere of Gaussian shape \cite{Teich1991Photonics}. More precisely, the propagated probe beam in the detector plane $E_d$ is of the form
\begin{equation}
 \tDF(p) (\bxi) = p_0 \E^{ \I k d} \exp(  \alpha_0  \bxi^2 )\quad \text{with} \quad p_0 \in \mC \setminus \{0\}, \,\Re(\alpha_0) < 0  
\end{equation}
If the focal point of the beam is located left of $E_d$, i.e.\  if the wave field is divergent at the detector, we further have $\Im(\alpha_0) \leq 0$ so that the resulting probe term
\begin{equation}
\frac{ \E^{-\I k d}}{ \gamma \wF} \tDF(p) (\bxi) = \frac{ p_0 }{ \gamma } \exp(   \alpha \bxi^2 )\quad \text{with} \quad \alpha = \alpha_0 - \frac{\I}{2} \label{eq:5}
\end{equation}
to be substituted into \eref{eq:IntensitiesNoDim} is in accordance with the assumptions of \thmref{thm:1}.

{All in all, our analysis shows that \thmref{thm:1} is applicable to the considered setting of near-field phase contrast imaging, i.e.\  we have proven the following corollary:
\vspace{1em}
\begin{cor}[Uniqueness of near-field phase contrast imaging] \label{cor:prob1}
 Any object transmission function $O \in  1+ \Sdashc{m}$, arising from a compactly supported sample illuminated by a known probe function $P$ of Gaussian- or plane wave shape via \eref{eq:1}, is uniquely determined 
 by intensity data $(I_{d})_{|U}$ of the form \eref{eq:Intensities} measured at a single distance $d > 0$ on any open set $U \subset \mR^m$.
 Moreover, $O$ is uniquely determined by linearized intensities $(I_{d, \Text{lin}})_{|U}$ with $I_{d, \Text{lin}} := I_{d} - \left|  \DF ( P \cdot (O-1))  \right|^2$, i.e.\ uniqueness is retained in the weak scattering limit.
\end{cor}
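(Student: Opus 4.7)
The plan is to reduce the physical problem to the abstract setting of Theorem~\ref{thm:1} by a straightforward coordinate change. Passing to the dimensionless variables $\bxi = (k/d)^{1/2}\bi x$ of \sref{S2}, the intensity data takes the form \eref{eq:IntensitiesNoDim},
\begin{equation*}
I(\bxi) = \left|\,\frac{\E^{-\I kd}}{\gamma\wF(\bxi)}\,\tDF(p)(\bxi) \;+\; \FF(\wF\cdot h)(\bxi)\,\right|^2,
\end{equation*}
where $h := p\cdot(o-1) \in \Sdashc{m}$ and $p$ is the dimensionless probe. Since $p$ is everywhere nonzero, unique recovery of $O$ from the data is equivalent to unique recovery of $h$. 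To invoke Theorem~\ref{thm:1}, I only need to identify $w$, $\check p$, $\alpha$ satisfying its hypotheses such that the reference term matches $\FF(\check p)\exp(\alpha(\bi\cdot)^2)$.

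The natural choice is $w := \wF$, which is smooth with $|\wF|\equiv 1$, hence everywhere nonzero. In the plane wave case $p = p_0 \in \mC\setminus\{0\}$, the identity $\E^{-\I kd}\tDF(p_0) = p_0$ noted in \sref{S2} makes the reference term equal $(p_0/\gamma)\exp(-\I\bxi^2/2)$, so $\alpha = -\I/2 \in \mC\setminus\mR$ and $\check p = (2\pi)^{m/2}(p_0/\gamma)\delta_0 \in \Sdashc{m}\setminus\{0\}$ fulfil the requirements. In the Gaussian case, \eref{eq:5} directly shows that the reference term equals $(p_0/\gamma)\exp(\alpha\bxi^2)$ with $\alpha = \alpha_0 - \I/2$; the assumptions $\Re(\alpha_0)<0$ and $\Im(\alpha_0)\leq 0$ ensure $\Im(\alpha) < 0$, so $\alpha\notin\mR$, and the same $\check p$ works.

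With the hypotheses of Theorem~\ref{thm:1} verified, $h$ is uniquely determined from $I|_U = F(h)|_U$ on any open $U\subset\mR^m$, and hence so is $O$. The linearized statement follows the same pattern: the coordinate change together with $|\tDF(p\cdot(o-1))|^2 = |\gamma|^2|\wF|^2|\FF(\wF\cdot h)|^2 = |\FF(\wF\cdot h)|^2$ shows that $I_{d,\Text{lin}}$ corresponds in the dimensionless picture precisely to $F_{\Text{lin}}(h)$ as in \eref{eq:defFlin}, and a second application of Theorem~\ref{thm:1} concludes. There is no substantive analytic obstacle beyond this bookkeeping—the entire-function estimates doing the actual work are already packaged inside Theorem~\ref{thm:1}, and the only point that requires a moment of care is confirming that the phase convention in the Gaussian case pushes $\alpha$ strictly off the real axis.
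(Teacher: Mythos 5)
Your proposal is correct and takes essentially the same route as the paper, whose proof is the one-line instruction to apply Theorem~\ref{thm:1} to the operators defined by \eref{eq:IntensitiesNoDim}, \eref{eq:4} and \eref{eq:5} and to recover $O$ via $o = 1 + h/p$; you have simply spelled out the parameter identifications ($w = \wF$, $\check p$ a multiple of $\delta_0$, $\alpha = -\I/2$ resp.\ $\alpha_0 - \I/2$) that the paper carries out in \sref{S2}. The verification that $|\gamma|^2|\wF|^2 = 1$ reduces the linearized data to $F_{\Text{lin}}(h)$ is the same bookkeeping the paper relies on implicitly.
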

\begin{proof}
 Apply \thmref{thm:1} to the operators defined by \eref{eq:IntensitiesNoDim}, \eref{eq:4}, \eref{eq:5}. Use $o = 1 + h / p $.
\end{proof}
\vspace{1em}
Note that we only claim unique reconstruction of $O$ from the data, not of the line integrals over $\delta$ and $\beta$  in \eref{eq:1}. Indeed, recovery of the the latter requires inversion of a pointwise exponential which is potentially non-unique due to the $2\pi$-periodicity of $\exp$ in the imaginary part. Physically, this gives rise to the \emph{phase-wrapping} problem: refractive phase shifts of the transmitted radiation by more than a wavelength may not be measured unambiguously. For many scatterers of interest, however, the induced refraction is sufficiently weak for phase-wrapping ambiguities to be excluded a priori.

On the other hand, the line integrals in \eref{eq:1} provide merely a shadow image of the actual sample. In order to resolve the spatially varying refractive index $n = 1-\delta + \I \beta$ itself, the object in \Fref{figure1} can be rotated by angles $-\theta_\perp \in \mSn{1}$ in the plane spanned by the $x_1$- and $z$-axes, changing the incident angle of the probing X-rays. This is the setting of \emph{phase contrast tomography}. Mathematically, it corresponds to a composition of the axial integration in \eref{eq:1} with rotations of $\delta, \beta$, which is equivalent to integrating along the axes $\theta_\perp$. Identifying the rotational plane with $\mR^2$, the ensemble of these rotated line integrals may be written as the two-dimensional Radon transform $\RR_2$ \cite{Natterer}
		\begin{equation}
		 \RR_2 f(\theta, x) = \int_{\mR}  f(x \theta + y \theta_\perp) \; \D y \quad \text{for} \quad f: \mR^2 \to \mC, x \in \mR, \, \theta_\perp \perp \theta \in \mSn 1 \label{eq:Radon}
		\end{equation}		
Setting $\RR g(\theta, x_1, \ldots , x_m):= \RR_2(g(\cdot,x_2,\ldots,x_m,\cdot))(\theta, x_1)$ for $g:\mR^{m+1}\to \mC$, the object transmission functions $O_\theta$ for the different incident angles are thus given by
 \begin{equation}
  O_\theta = \exp \left( - \I k \RR_\theta(\delta - \I \beta) \right)  \quad \text{where} \quad \RR_{\theta}f := \RR(f)(\theta, \cdot). \label{eq:ORadon}
 \end{equation}
 According to \cref{cor:prob1}, these are uniquely determined by the corresponding near-field intensities $\{I_{d,\theta}\}_{\theta \in \mSn{1}}$. By \eref{eq:ORadon}, this implies that the object $\delta - \I \beta$ can be recovered from the tomographic data if both the pointwise exponential and the Radon transform in \eref{eq:ORadon} are invertible. The former requires to restrict to non-phase-wrapping objects which induce refractive phase shifts of at most one wavelength. These considerations lead to the following corollary which is proven in \sref{S4}:
 \vspace{1em}
\begin{cor}[Uniqueness of phase contrast tomography modulo phase-wrapping]\label{cor:prob2}
 Any compactly supported object $\delta - \I \beta \in \Sdashc{m+1}$ s.t. $0 \leq k \RR(\delta) < 2 \pi$, illuminated by a known Gaussian beam- or plane wave probe function $P$, is uniquely determined by tomographic intensity data $\{(I_{d,\theta})_{|U}\}_{\theta \in V}$ of the form \eref{eq:Intensities}, \eref{eq:ORadon} measured at a single distance $d > 0$ on any open sets $U \subset \mR^m, V \subset \mSn{1}$. Moreover, uniqueness is retained in the weak scattering limit represented by a linearization of \eref{eq:Intensities}, \eref{eq:ORadon} at $\delta - \I \beta = 0$.
\end{cor}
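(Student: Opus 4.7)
The plan is to cascade three independent uniqueness results. First, fix an arbitrary angle $\theta \in V$. The intensity $I_{d,\theta}$ is exactly of the form \eref{eq:Intensities} with object transmission function $O_\theta$ as defined in \eref{eq:ORadon}, so \cref{cor:prob1} applies pointwise in $\theta$ and recovers $O_\theta$ on all of $\mR^m$ from the restricted data $(I_{d,\theta})_{|U}$ on the open set $U$.

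Second, I would invert the pointwise exponential in \eref{eq:ORadon}. Splitting $O_\theta = \exp(-k \RR_\theta \beta)\cdot \exp(-\I k \RR_\theta \delta)$, the modulus yields $\RR_\theta \beta$ via a real logarithm and the argument yields $-k \RR_\theta \delta$ modulo $2\pi$. The hypothesis $0 \leq k \RR(\delta) < 2\pi$ holds uniformly in $\theta$ and selects the unique branch with $-k \RR_\theta \delta \in (-2\pi,0]$, so that $\RR_\theta(\delta - \I \beta)$ is determined unambiguously for every $\theta \in V$.

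Third, I would invert the 2D Radon transform from the open set of angles $V \subset \mSn 1$. For each fixed transverse coordinate $(x_2,\ldots,x_m)$, the corresponding slice of $\delta - \I \beta$ is a compactly supported distribution on $\mR^2$, whose Fourier transform extends by Paley--Wiener to an entire function of exponential type on $\mC^2$. The projection-slice identity $\FF_1(\RR_\theta f)(\sigma) = \FF_2 f(\sigma \theta)$ identifies the available data, for $\theta \in V$, with the restriction of this entire function to the open cone swept out by the lines through the origin with direction in $V$. Real-analytic continuation pins down the Fourier transform of the slice on all of $\mR^2$ and hence the slice itself; varying $(x_2,\ldots,x_m)$ recovers $\delta - \I \beta$ on $\mR^{m+1}$.

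The weak scattering limit follows by the same cascade: the linearized intensity operator matches $F_{\Text{lin}}$ in \thmref{thm:1} and, combined with the linearization $O_\theta \approx 1 - \I k \RR_\theta(\delta - \I \beta)$ of \eref{eq:ORadon} at $\delta - \I \beta = 0$, yields $\RR_\theta(\delta - \I \beta)$ for every $\theta \in V$ without any phase-wrapping hypothesis, the intermediate pointwise inversion being manifestly injective; the Radon inversion step then goes through unchanged. The subtlest point, on which I expect the argument to hinge, is the Radon inversion from an arbitrarily small open set $V$: while the Paley--Wiener/analytic-continuation argument is classical for functions, it must here be formulated for compactly supported tempered distributions in arbitrary lateral dimension $m$, consistently with the distributional framework of \thmref{thm:1}.
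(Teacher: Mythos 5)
Your proposal is correct and follows essentially the same three-step cascade as the paper's own proof: apply \cref{cor:prob1} angle-by-angle to recover $O_\theta$, invert the pointwise exponential using the non-phase-wrapping bound $0 \leq k\RR(\delta) < 2\pi$ (the paper phrases this as injectivity of $\exp$ on the strip $-2\pi < \Im(z) \leq 0$), and then combine the Fourier-slice theorem with Paley--Wiener analyticity of $\FF(\delta - \I\beta)$ to continue from the limited angular range $V$. The only (cosmetic) difference is that you analytically continue the 2D Fourier transform slice-by-slice in the transverse coordinates, whereas the paper continues the full $(m+1)$-dimensional entire function $\FF(\delta-\I\beta)$ from the open wedge $W_V \subset \mR^{m+1}$ determined by $V$ --- the latter avoids the distributional slicing issue you correctly flag as the subtlest point; your explicit treatment of the linearized case is in fact more detailed than the paper's.
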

\vspace{1em}

}

We conclude this section with some remarks concerning possible generalizations of the considered idealized setup. Firstly, note that the setting of \thmref{thm:1} allows for much more general probe functions than the above examples, although this generality is difficult to translate into a particular set of admissible choices.
Moreover, if the illumination is unknown, an additional flat field measurement of the intensity profile without an object in the beam line may be used to normalize the scattering intensities  by division through the empty beam data. This procedure yields a good approximation of the hypothetical data in the idealized case of plane wave illumination if the probe field varies only on coarse length scales compared to the object structure \cite{Hagemann2014EmptyBeam}. Finally, note that the above derivations assume a parallel beam geometry, i.e.\  non-divergent incident radiation, whereas state-of-the-art phase contrast imaging setups often use cone beams emanating from a point-like micro-focus X-ray source. However, we emphasize that measurements obtained with cone beam illumination can be associated with an approximate parallel beam setup via the Fresnel scaling theorem \cite{Papoulis1968FresnelScale,Pogany1997noninterferometric} or incorporated explicitly into the model \cite{Louis2013Cone}.

\section{Mathematical preliminaries} \label{S3}

In the following, we review elements of the theory of entire functions. The given overview is based on the more detailed treatment in \cite{BoasEntire,ConwayOneComplex1,FreilingEntire}. The relevance of entire functions to phase retrieval is due to the well-known Paley-Wiener-Schwartz theorem:
	\vspace{1em}\begin{theorem}[Paley-Wiener-Schwartz \cite{Hoermander}]\label{thm:PaleyW}
		Let $K\subset \mR^m$ compact and convex. Then, if $u$ is a distribution of order $N \in \mN \cup \{ 0\}$ with support contained in $K$ and $\hat u := \FF(u)$, $\hat u$ has an extension to an entire function and there exists a constant $C> 0$ such that
		\begin{equation}
		 |\hat u(\bxi)| \leq C ( 1+ |\bxi|)^N \exp\left( \sup_{x \in K} \Im(\bxi ) \cdot x \right) \MTEXT{for all} \bxi \in \mC^m \label{eq:PaleyWBound}
		\end{equation}
		Conversely, any entire function $\hat u$ satisfying \eref{eq:PaleyWBound} is the complex extension of the Fourier transform of a distribution $u$ of order $\leq N$ and support in $K$.
	\end{theorem}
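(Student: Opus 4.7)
The plan is to establish the two implications separately, both via holomorphic techniques and standard duality estimates for distributions of finite order.

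\textbf{Direct implication.} I would define $\hat u(\bxi) := \langle u, \Text{e}^{-\I \bxi \cdot \bi x}\rangle$ for $\bxi \in \mC^m$. This is well-defined since $u \in \Sdashc{m}$ pairs with every smooth function and $\bi x \mapsto \Text{e}^{-\I \bxi \cdot \bi x}$ is smooth. Entirety of $\hat u$ follows from $\partial_{\bar \bxi_j} \Text{e}^{-\I \bxi \cdot \bi x} = 0$ after commuting the Wirtinger derivative with the duality pairing — equivalently, by expanding $\Text{e}^{-\I \bxi \cdot \bi x}$ in a convergent power series and invoking continuity of $u$. For the growth bound, I would use the order-$N$ characterization: there exists $C > 0$ and a compact neighbourhood $K_1 \supset K$ such that $|\langle u, \phi\rangle| \leq C \sum_{|\alpha| \leq N} \sup_{K_1} |\partial^\alpha \phi|$ for all smooth $\phi$. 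Introducing cutoffs $\chi_\epsilon \in \CpR{\infty}{m}$ with $\chi_\epsilon = 1$ on $K$, support in the $\epsilon$-neighbourhood $K_\epsilon$, and $|\partial^\alpha \chi_\epsilon| \leq C_\alpha \epsilon^{-|\alpha|}$, I write $\hat u(\bxi) = \langle u, \chi_\epsilon \Text{e}^{-\I \bxi \cdot \bi x}\rangle$; the Leibniz rule bounds the $\alpha$-th derivative of the product by $C_\alpha' (|\bxi| + \epsilon^{-1})^{|\alpha|} \Text{e}^{\sup_{K_\epsilon} \mathrm{Im}(\bxi) \cdot \bi x}$. Taking $\epsilon = (1+|\bxi|)^{-1}$ collapses the polynomial factor to $(1+|\bxi|)^N$, while the exponential factor differs from $\Text{e}^{\sup_K \mathrm{Im}(\bxi) \cdot \bi x}$ only by a bounded prefactor $\Text{e}^{\epsilon |\mathrm{Im}(\bxi)|} \leq \Text{e}$, yielding \eref{eq:PaleyWBound}.

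\textbf{Converse implication.} Restricting \eref{eq:PaleyWBound} to real arguments yields $|\hat u(\bxi)| \leq C(1+|\bxi|)^N$, so $\hat u \in \Sdash{m}$ and $u := \FF^{-1}(\hat u) \in \Sdash{m}$. That $u$ has order $\leq N$ follows from a standard structure theorem, writing $u$ as a sum of derivatives of order at most $N$ of continuous functions via Fourier inversion against the bounded factor $(1+|\bxi|)^{-N-m-1}$. It remains to show $\supp u \subset K$. Fix $\phi \in \CpR{\infty}{m}$ with compact support disjoint from $K$. By the supporting-hyperplane theorem — this is where convexity of $K$ is essential — there exist $y \in \mR^m$ and $\delta > 0$ such that $y \cdot \bi x \geq h_K(y) + \delta$ for all $\bi x \in \supp \phi$, with $h_K(y) := \sup_{\bi z \in K} y \cdot \bi z$. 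Parseval's identity gives
\[
 \langle u, \phi \rangle = (2\pi)^{-m} \int_{\mR^m} \hat u(\bxi)\, \widehat \phi(-\bxi)\, \rmd \bxi.
\]
Since $\hat u$ is entire by hypothesis and $\widehat \phi$ is entire of rapid decrease on every horizontal strip (by the direct implication applied to $\phi$), Cauchy's theorem — applied coordinate-by-coordinate and justified by absolute integrability — lets me deform the contour from $\mR^m$ to $\mR^m + \I t y$ for each $t > 0$. Applying \eref{eq:PaleyWBound} to $\hat u$ produces an exponential factor $\Text{e}^{t h_K(y)}$; applying it to $\widehat \phi$ yields a rapid decay factor $(1+|\bxi|+t|y|)^{-M}$ multiplied by $\Text{e}^{-t(h_K(y) + \delta)}$. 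The factors of $\Text{e}^{t h_K(y)}$ cancel, leaving a net $\Text{e}^{-t\delta}$, and choosing $M > N+m$ makes the spatial integral uniformly bounded. Sending $t \to \infty$ forces $\langle u, \phi\rangle = 0$, hence $\supp u \subset K$.

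\textbf{Main obstacle.} The principal technical difficulty lies in the contour-shift step of the converse: I must verify uniform-in-$t$ absolute integrability on the translated contour $\mR^m + \I t y$, justify Cauchy's theorem in several complex variables by successive one-dimensional deformations (bounding the arcs at infinity using the polynomial growth of $\hat u$ and the Schwartz decay of $\widehat \phi$), and carefully orchestrate the cancellation of $\Text{e}^{t h_K(y)}$ against $\Text{e}^{-t(h_K(y)+\delta)}$ so that the exponent has the correct sign uniformly in $\bxi$. A smaller subtlety is that the direct implication has to be applied to $\phi$ itself before it can feed into the converse; this is internally consistent since the direct proof uses only compactness (not convexity) of $\supp \phi$.
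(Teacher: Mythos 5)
The paper does not prove this statement at all: it is imported verbatim from H\"ormander \cite{Hoermander} and used as a black box, so the only available comparison is with the standard textbook proof --- which is exactly the route you take (analytic continuation of the pairing with $\E^{-\I \bxi\cdot\bi x}$ plus an $\epsilon$-dependent cutoff for the direct part, contour shifting against a separating hyperplane for the converse). Your direct implication is correct as written; in particular the choice $\epsilon=(1+|\bxi|)^{-1}$ is precisely the device needed to pass from the seminorm estimate on a fixed neighbourhood $K_1\supset K$ to the supporting function of $K$ itself, and the Leibniz-rule bookkeeping is right.

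Two steps in your converse do not work as stated. First, the separation step: for an arbitrary $\phi$ with compact support disjoint from $K$ there need not exist a single $y$ with $y\cdot\bi x\geq h_K(y)+\delta$ on all of $\supp\,\phi$ --- take $\supp\,\phi$ to surround $K$, e.g.\ an annulus enclosing it; no one hyperplane separates the two sets. You must first localize: it suffices to show $\langle u,\phi\rangle=0$ for $\phi$ supported in a small ball disjoint from $K$, which a hyperplane does separate from the convex set $K$, and then patch with a partition of unity. Equivalently, run the contour shift once per direction $y$ to conclude $\supp\,u\subset\{\bi x:\,y\cdot\bi x\leq h_K(y)\}$ and intersect over all $y$; this is where convexity of $K$ actually enters. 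With that repair the contour-shift estimate itself (the cancellation of $\E^{t h_K(y)}$ against $\E^{-t(h_K(y)+\delta)}$ and the choice $M>N+m$) is fine. Second, the claim that $u$ has order $\leq N$: Fourier inversion against $(1+|\bxi|)^{-N-m-1}$ exhibits $u$ as a derivative of order $N+m+1$ of a continuous function, i.e.\ gives order $\leq N+m+1$, not $\leq N$. H\"ormander's converse asserts only that $\hat u$ is the Fourier--Laplace transform of \emph{some} distribution supported in $K$, without the sharp order; the order claim in the statement as reproduced here is stronger than what your argument (or the cited source) delivers, though nothing in the paper relies on it --- only the direct implication and entirety of $\FF(\check p)$, $\FF(w\cdot h)$ are used in the proof of \thmref{thm:1}.
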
\vspace{1em}
\noindent The essence of the result is that compactly supported objects can be identified with entire functions of limited growth via the Fourier transform. Owing to the relatedness of the transforms by \eref{eq:NonDimFresnel}, the same holds true for the Fresnel propagator.

For simplicity, we restrict to univariate entire functions $f: \mC \to \mC$. In order to apply the presented theory in the proof of  \thmref{thm:1}, multivariate functions $g: \mC^m \to \mC$ will be identified with families $\{g_{\bxi_0}\}_{\bxi_0 \in \mC^{m-1}}$ with $g_{\bxi_0}(\xi):=g(\xi,\bxi_0)$. The principal idea lies in estimating the growth behavior of entire functions, characterized by
        \begin{equation}
                M_f(r) := \max_{\xi \in \mC: |\xi| = r} |f(\xi)| \qquad \text{and} \qquad m_f(r) := \min_{\xi \in \mC: |\xi| = r} |f(\xi)|. \label{eq:DefmM}
        \end{equation}
Asymptotic bounds on $f$ give rise to the definition of its \emph{order} $\lambda_f$ and \emph{type} $\tau_f$:
\numparts
\begin{eqnarray}
 \lambda_f &:= \cases{0&for $f$ constant \\
\limsup_{r \to \infty} \frac{ \log \log M_f(r) }{ \log r } &else \\} \label{eq:DefOrder} \\
\tau_f &:= \cases{0&if $\lambda_f =0$ \\
\limsup_{r \to \infty} r^{-\lambda_f} \log M_f(r) &else \\}
\end{eqnarray}
\endnumparts
We say that order 1 entire functions are of exponential order. According to \thmref{thm:PaleyW}, the Fourier transform of any compactly supported function is an entire function of at most exponential order of finite type.
 Moreover, note that order and type of an entire function $f$ is preserved under \emph{Schwarz reflection} $f \mapsto \adj f$, defined by
  \begin{equation}
   \adj{f} (\xi ) := \conj{f (\conj{\xi})} \MTEXT{for all} \xi \in \mC.
  \end{equation}

By definition, adding a lower order entire function $g$, $\lambda_g < \lambda_f$ to $f$ does not change its order, nor its type. Likewise, it is clear that multiplication with $g$ cannot \emph{increase} any of these properties. For the growth estimates to be made, we further need that they may neither \emph{decrease} if $g$ is of at most exponential order and not identically zero:
	\vspace{1em}\begin{lemma}[Decay bounds for low order entire functions \cite{BoasEntire}]\label{lem:MaxDecayExpOrder}
		Let $f$ be an entire function of order $0 \leq \lambda_f \leq 1$ that is not identically zero and let $\varepsilon > 0$. Then
		\begin{equation*}
		 \limsup_{r \to \infty} m_f(r) M_f(r)^{1+\varepsilon} > 0
		\end{equation*}
		In particular, if $f$ is at most of exponential type $\tau_f$, then $\limsup_{r \to \infty} m_f(r) \E^{(\tau_f+\varepsilon) r} = \infty$.
	\end{lemma}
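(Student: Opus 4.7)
The plan is to prove the first (main) assertion; the ``in particular'' statement on exponential type then follows by applying it with a sufficiently small $\varepsilon' > 0$ in place of $\varepsilon$, combining $m_f(r_n) \geq c M_f(r_n)^{-(1+\varepsilon')}$ with the type estimate $\log M_f(r) \leq (\tau_f + \delta) r$ valid for all large $r$, and choosing $\varepsilon'$ and $\delta$ both small enough that $(1+\varepsilon')(\tau_f + \delta) < \tau_f + \varepsilon$, which yields $m_f(r_n) \, e^{(\tau_f + \varepsilon) r_n} \to \infty$.

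For the main assertion, I would begin with Hadamard's factorization theorem: since $\lambda_f \leq 1$, one may write $f(z) = z^{m_0} e^{a + b z} P(z)$ where $P(z) = \prod_n E_p(z/z_n)$ is a canonical product of genus $p \in \{0, 1\}$ over the non-zero zeros $z_n$ (repeated with multiplicity), and these zeros satisfy $\sum_n |z_n|^{-1 - \eta} < \infty$ for every $\eta > 0$. The elementary factors $z^{m_0}$ and $e^{a + b z}$ have modulus bounded below by $r^{m_0} e^{-|a| - |b| r}$ on $|z| = r$, which comfortably beats the target $c M_f(r)^{-(1+\varepsilon)}$ whenever $M_f$ itself grows at least exponentially; the substantive task is therefore a lower bound on $|P(z)|$ along suitably chosen circles $|z| = r$.

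For this I plan to invoke a Cartan-type exclusion lemma: for any budget $H > 0$, the set $\{z : \prod_{|z_n| \leq 2r} |z - z_n| < H^{N(2r)}\}$, with $N(t) := \#\{n : |z_n| \leq t\}$, is covered by finitely many discs whose radii sum to at most $2 e H$. Projecting these discs onto the positive real axis over a dyadic sequence of scales and tuning $H = H(r)$ against the zero counts produces an exceptional set $B \subset (0, \infty)$ of finite logarithmic measure. For $r \notin B$, the resulting lower bound on the ``near'' factors $|z_n| \leq 2r$, combined with the standard pointwise estimate $\log |E_p(w)| \geq -C |w|^{p+1}$ for $|w| \leq 1/2$ applied to the ``far'' factors $|z_n| > 2r$ (summable in view of $\sum_n |z_n|^{-1 - \eta} < \infty$), yields $\log m_P(r) \geq -(1 + \varepsilon/2) \log M_P(r) - O(1)$. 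Since $(0, \infty) \setminus B$ is unbounded, there are arbitrarily large such $r$; reabsorbing the elementary factor $z^{m_0} e^{a + b z}$ and inflating $1 + \varepsilon/2$ to $1 + \varepsilon$ then delivers the claimed $\limsup m_f(r) M_f(r)^{1+\varepsilon} > 0$.

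The main obstacle is the borderline case $\lambda_f = 1$, where the canonical product genuinely requires genus-one factors $E_1(w) = (1 - w) e^w$ and the series $\sum_n |z_n|^{-1}$ may diverge. The linear exponential inside each $E_1$ contributes $-\mathrm{Re}(w)$ to $\log |E_1(w)|$, which must be tracked carefully against the quadratic error term $-C |w|^2$ in the pointwise estimate; it is precisely this bookkeeping that prevents the cleaner exponent $1$ and forces the small loss $\varepsilon$ in the final conclusion.
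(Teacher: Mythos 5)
First, a remark on the comparison itself: the paper does not prove this lemma at all --- it is quoted from Boas's monograph on entire functions --- so your proposal has to be measured against the classical proof rather than against anything in the text. Your reduction of the ``in particular'' clause to the main inequality is correct as stated.

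The main assertion, however, is not reached by the route you describe: the Hadamard--plus--Cartan machinery cannot produce the sharp exponent $1+\varepsilon$, and the step where you assert ``yields $\log m_P(r)\ge -(1+\varepsilon/2)\log M_P(r)-O(1)$'' is exactly where the argument breaks. Outside the Cartan discs the near factors satisfy $\prod_{|z_n|\le 2r}|1-z/z_n|\ge (H/2r)^{\,n(2r)}$, so to push this up to $M_P(r)^{-(1+\varepsilon/2)}$ you need $n(2r)\log(2r/H)\le(1+\varepsilon/2)\log M_P(r)$. Jensen's formula only gives $n(2r)\le\log M_P(4r)/\log 2$, and for order-one functions $\log M_P(4r)$ is itself a fixed multiple of $\log M_P(r)$; hence $\log(2r/H)$ must be bounded by an absolute constant, i.e.\ $H$ must be comparable to $r$. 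But then the exceptional discs have total radius $2eH\sim r$ and can swallow whole annuli, so no unbounded set of admissible radii is guaranteed to survive. Shrinking $H$ to keep the exceptional set of finite logarithmic measure (say $H=\delta_j r$ with $\sum_j\delta_j<\infty$ over dyadic blocks) inflates the exponent to $K\log M_P(Cr)$ with $K$ far larger than $1+\varepsilon$, or, in the cruder classical form, yields only $\log m_f(r)>-r^{1+\varepsilon}$ off a set of finite logarithmic measure. A parallel loss occurs in your far-factor estimate, where $-Cr^2\sum_{|z_n|>2r}|z_n|^{-2}$ is of size $-C'r$ with an absolute constant $C'$ that need not be dominated by $(1+\varepsilon)\log M_f(r)$ at the radii in question. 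The sharp constant is genuinely a $\cos\pi\rho$-type statement: the textbook proof first reduces to products with zeros on a single ray via $|1-w|\ge\bigl|1-|w|\bigr|$ and then runs a Jensen-type averaging argument in $\log|1-r/t|$ against the zero-counting measure, a mechanism quite different from disc exclusion. It is worth noting that the weaker bound your method does deliver, $\log m_f(r)>-r^{1+\varepsilon}$ for arbitrarily large $r$, would in fact suffice for the only use made of this lemma in the paper, namely the comparison against $\exp(-\varepsilon r^2)$ in the proof of Lemma~\ref{lem:LinearIndependence}; but it does not establish the lemma as stated.
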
\vspace{1em}
\noindent The essential message of \lemref{lem:MaxDecayExpOrder} is that non-vanishing factors of at most exponential order may never weaken super-exponential growth.

From the perspective of the theory of entire functions, the images of the operators $F, F_{\text{lin}}$ considered in \thmref{thm:1} are of a very particular structure: by expanding the squared moduli in  \eref{eq:defF} and \eref{eq:defFlin}, we obtain linear combinations of Fourier transforms of compactly supported distributions, i.e.\ order $\leq\,$1 entire functions according to \thmref{thm:PaleyW}, scaled with different quadratic-exponential factors $\exp( \alpha_j \bxi^2 )$. The uniqueness statement in \thmref{thm:1} will follow from the surprising insight that these summands may never balance one another due to their inconsistent super-exponential growth behavior in the complex plane, i.e.\ are linearly independent:
	\vspace{1em}\begin{lemma}[Linear independence of Fresnel factors]\label{lem:LinearIndependence}
		For $K \in \mN$, let $f_1, \ldots, f_K: \mC \to \mC$ be entire functions of order $ \leq 1$  such that for pairwise different  $\alpha_1, \alpha_2, \ldots, \alpha_K \in \mC$
		\begin{equation}
		 \sum_{j = 1}^N f_j(\xi)  \exp\left( \alpha_j \xi^2 \right)  = 0 \MTEXT{for all} \xi \in \mC \label{eq:LinComb}.
		\end{equation}
		Then $f_1 = f_2 = \ldots = f_N = 0$.
	\end{lemma}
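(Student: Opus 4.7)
The plan is to induct on $K$. The base case $K=1$ is trivial since $\exp(\alpha_1 \xi^2)$ never vanishes, forcing $f_1 \equiv 0$. For the inductive step from $K-1$ to $K$, the goal is to show that one coefficient, say $f_K$, must vanish; then \eref{eq:LinComb} collapses to a $K-1$ term relation and the inductive hypothesis concludes the proof.

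The strategy is to exploit the contrast between the order-$2$ Fresnel factors $\exp(\alpha_j \xi^2)$ and the order-$\leq 1$ coefficients $f_j$. First I would pick $\phi \in \mR$ so that the projections $\Re(\alpha_j \E^{-\I \phi})$ are pairwise distinct, which is possible because the set of $\phi$ making two of them coincide is discrete. After relabeling I may assume $\Re(\alpha_K \E^{-\I \phi}) > \Re(\alpha_j \E^{-\I \phi})$ for every $j<K$, and I set $\theta_0 := -\phi/2$. Dividing \eref{eq:LinComb} by the nowhere-vanishing $\exp(\alpha_K \xi^2)$ yields
\begin{equation*}
 f_K(\xi) = -\sum_{j=1}^{K-1} f_j(\xi) \exp\bigl((\alpha_j - \alpha_K)\xi^2\bigr).
\end{equation*}
On the ray $\xi = r \E^{\I \theta_0}$ the real part of $(\alpha_j - \alpha_K)\xi^2$ equals $-c_j r^2$ with $c_j > 0$, and since each $f_j$ is of order $\leq 1$, a bound $|f_j(\xi)| \leq \exp(|\xi|^{1+\varepsilon})$ holds for large $|\xi|$. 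Consequently every summand is dominated by $\exp(r^{1+\varepsilon} - c\, r^2)$ with $c := \min_{j<K} c_j > 0$.

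This forces $|f_K(r \E^{\I \theta_0})| \leq \exp(-c r^2 / 2)$ for large $r$, so $m_{f_K}(r) \leq \exp(-c r^2/2)$. On the other hand, the order bound gives $M_{f_K}(r) \leq \exp(r^{1+\varepsilon})$. Combining these,
\begin{equation*}
 m_{f_K}(r)\, M_{f_K}(r)^{1+\varepsilon} \leq \exp\bigl(-c r^2/2 + (1+\varepsilon) r^{1+\varepsilon}\bigr) \longrightarrow 0 \quad (r \to \infty),
\end{equation*}
which contradicts \lemref{lem:MaxDecayExpOrder} unless $f_K \equiv 0$. The induction hypothesis then eliminates the remaining coefficients.

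The main obstacle I anticipate is arranging for a \emph{single} direction $\theta_0$ along which \emph{all} residual Fresnel factors decay simultaneously, which is precisely why $\alpha_K$ must be chosen as an extreme point of $\{\alpha_j\}$ via a generic linear functional so that the differences $\alpha_j - \alpha_K$ all lie in a common open half-plane. The other delicate point is the quantitative gap: the ray-wise decay of rate $r^2$ must comfortably beat the stretched-exponential lower bound $m_{f_K}(r) \gtrsim \exp(-O(r^{1+\varepsilon}))$ guaranteed by \lemref{lem:MaxDecayExpOrder}; this works out because order $\leq 1$ caps the growth of $M_{f_K}$ at $\exp(r^{1+\varepsilon})$, strictly slower than $r^2$, leaving plenty of room.
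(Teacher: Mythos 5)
Your proposal is correct and follows essentially the same route as the paper: isolate an extremal exponent, pass to a ray in $\mC$ along which its Fresnel factor dominates all others by a factor $\exp(-c|\xi|^2)$, and contradict the minimum-modulus bound of \lemref{lem:MaxDecayExpOrder} for nonzero entire functions of order $\leq 1$. The only cosmetic differences are that you select the dominant index via a generic linear functional $\Re(\alpha_j \E^{-\I\phi})$ rather than the paper's maximum-modulus choice $|\alpha_{j_{\max}}| = \max_j |\alpha_j|$ (both yield the required strict separation), and that you organize the elimination as a formal induction on $K$ where the paper iterates the same extremal-index argument.
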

	\begin{proof}
	 We choose $j_{\max} \in \{ 1, \ldots, K\}$ s.t. $| \alpha_{j_{\max}} | = \max_{1 \leq j \leq K } |\alpha_j |$ and show  $f_{j_{\max}} = 0$. The general statement then follows by inductively repeating this maximum index choice.
	 
	 Writing $\alpha_{j} = |\alpha_{j}| \exp( \I \varphi_j )$, we consider a diagonal in the complex plane given by
	 \begin{equation*}
	  D := \left\{ r \exp\left( -\I \varphi_{j_{\max}}/2 \right) : r \in \mR \right\}.
	 \end{equation*}
	 Then we have by construction
	 \begin{equation*}
	  \left|\exp( \alpha_{j} \xi^2 )\right| = \exp\left( |\alpha_{j}| \cos( \varphi_j - \varphi_{j_{\max}} ) |\xi|^2 \right) \MTEXT{for all}  \xi \in D.
	 \end{equation*}
	 Since for all $ j \in \{1, \ldots , K \} \setminus \{ j_{\max} \}$ either $|\alpha_{j}| < | \alpha_{j_{\max}} |$ or $\varphi_j \notin \varphi_{j_{\max}} + 2\pi \mZ$ and $|\alpha_{j}| = | \alpha_{j_{\max}} |$ holds true, there exists an $\varepsilon > 0$ such that for all $\xi \in D, j \neq j _{\max}$
	 \begin{equation*}
	  \left|\exp( \alpha_{j} \xi^2 )\right| \leq \exp\left( (| \alpha_{j_{\max}} | - 2 \varepsilon) |\xi|^2 \right) = \exp\left( -2 \varepsilon |\xi|^2 \right) \left|\exp( \alpha_{j_{\max}} \xi^2 )\right|,
	 \end{equation*}
	 i.e.\ the growth of $\exp( \alpha_{j_{\max}} \xi^2 )$ along $D$ exceeds that of all other $\exp(\alpha_j \xi^2)$ by a super-exponential factor. Using \eref{eq:LinComb} and the triangle inequality, this implies for all $\xi \in D$
	 \begin{eqnarray}
	      \left|f_{j_{\max}}(\xi)\right| \exp\left(  \varepsilon |\xi|^2  \right) &\leq& \sum_{j = 1, \,j \neq j_{\max}}^N     \frac{ \exp\left(   \varepsilon |\xi|^2 \right)  \left| f_j(\xi) \right| \left| \exp( \alpha_j \xi^2 ) \right| }{  \left| \exp( \alpha_{j_{\max}} \xi^2 )\right| }  \nonumber \\ 
	      &\leq&   \sum_{j = 1, \,j \neq j_{\max}}^N \left| f_j(\xi) \right| \exp\left(  - \varepsilon |\xi|^2 \right). \label{eq:GrowthEst}
	 \end{eqnarray}
	 
	 As the $f_j$ are of at most exponential order, the right hand side of \eref{eq:GrowthEst} must vanish for $|\xi| \to \infty$. Using the notation introduced in \eref{eq:DefmM} and the definition of the order in \eref{eq:DefOrder}, we obtain for the same reason
	 \begin{eqnarray}
	  \limsup_{r \to \infty} \frac{ \log \log \left( M_{f_{j_{\max}}}(r)^{1+\varepsilon} \right) }{ \log r } &=& \limsup_{r \to \infty} \frac{ \log \log   M_{f_{j_{\max}}}(r)  }{ \log r }  \leq 1  \nonumber \\
	  &<& \limsup_{r \to \infty} \frac{ \log \log  \left( \exp\left(   \varepsilon r^2  \right) \right) }{ \log r }.  
	 \end{eqnarray}
	 Hence, there exists a constant $C > 0$ such that
	 \begin{equation}
	  C M_{f_{j_{\max}}}(r)^{1+\varepsilon} \leq  \exp\left(   \varepsilon r^2  \right) \MTEXT{for all} r \in \mR. \label{eq:MfEst}
	 \end{equation}
         Now assume that $f_{j_{\max}} \neq 0$. Then an application of \lemref{lem:MaxDecayExpOrder}, \eref{eq:DefmM} and \eref{eq:MfEst} yields
         \begin{eqnarray}
	     \limsup_{\xi \in D, \, |\xi| \to \infty} |f_{j_{\max}}(\xi)| \exp\left(  \varepsilon |\xi|^2  \right) &\geq&  C \limsup_{ |\xi| \to \infty} m_{f_{j_{\max}}}(|\xi|) M_{f_{j_{\max}}}(|\xi|)^{1+\varepsilon} > 0 \nonumber \\
	     &=& \limsup_{  |\xi| \to \infty} \sum_{j = 1, \,j \neq j_{\max}}^N | f_j(\xi) | \exp\left(  - \varepsilon |\xi|^2 \right)  
         \end{eqnarray}
         in contradiction to \eref{eq:GrowthEst}. Hence, we conclude that $f_{j_{\max}} = 0$.
	\end{proof}
	\vspace{1em}

\section{Proof of the main results} \label{S4}

With the preparations of \sref{S3}, we are now in a position to prove \thmref{thm:1}. For completeness, we start by showing well-definedness:
\vspace{1em}\begin{lemma} \label{lem:WellDefined}
 The operators $F, F_{\Text{lin}}$ in \thmref{thm:1} are well-defined. If $\Re(\alpha) \leq 0$, then furthermore $F(\Sdashc{m}) \subset \Sdash{m}$ and $F_{\Text{lin}}(\Sdashc{m}) \subset \Sdash{m}$.
\end{lemma}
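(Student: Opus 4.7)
The plan is to establish both parts by direct appeal to the Paley-Wiener-Schwartz theorem \thmref{thm:PaleyW} together with elementary estimates. For well-definedness, I first note that since $\check p \in \Sdashc{m}$, its Fourier transform $\FF(\check p)$ extends to an entire function on $\mC^m$ and hence restricts to an element of $\CpR{\infty}{m}$. For $h \in \Sdashc{m}$, the product $w \cdot h$ is a well-defined compactly supported distribution (smooth function times compactly supported distribution, with support contained in $\supp(h)$), so $\FF(w \cdot h) \in \CpR{\infty}{m}$ for the same reason. Since $\bxi \mapsto \exp(\alpha \bxi^2)$ is entire, the inner expression
\begin{equation*}
\FF(\check p)(\bxi)\exp(\alpha \bxi^2) + \FF(w \cdot h)(\bxi)
\end{equation*}
is smooth, whence so are its squared modulus $F(h)$ and the difference $F_{\Text{lin}}(h)$.

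For the tempered distribution claim, it suffices to verify that $F(h)$ and $F_{\Text{lin}}(h)$ grow at most polynomially on $\mR^m$, since any smooth function of polynomial growth defines a tempered distribution by ordinary integration against Schwartz test functions. Applying the bound in \thmref{thm:PaleyW} with $\Im(\bxi) = 0$ yields constants $C > 0$, $N \in \mN$ such that
\begin{equation*}
|\FF(\check p)(\bxi)| \leq C(1+|\bxi|)^N \quad \text{and} \quad |\FF(w \cdot h)(\bxi)| \leq C(1+|\bxi|)^N \quad \text{for all } \bxi \in \mR^m.
\end{equation*}
Under the hypothesis $\Re(\alpha) \leq 0$ one has $|\exp(\alpha \bxi^2)| = \exp(\Re(\alpha)|\bxi|^2) \leq 1$, and the triangle inequality combined with expansion of the square gives
\begin{equation*}
|F(h)(\bxi)| \leq \bigl(2C(1+|\bxi|)^N\bigr)^2 = 4 C^2 (1+|\bxi|)^{2N}.
\end{equation*}
For $F_{\Text{lin}}(h)$ the expansion cancels the $|\FF(w\cdot h)|^2$ contribution, leaving only $|\FF(\check p)|^2 \exp(2\Re(\alpha)\bxi^2)$ and the cross terms $2\Re\bigl(\FF(\check p)(\bxi)\exp(\alpha \bxi^2)\overline{\FF(w\cdot h)(\bxi)}\bigr)$, each of which is bounded by the same type of polynomial estimate.

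The main — quite modest — obstacle here is bookkeeping rather than analysis: one must carefully track which objects belong to $\Sdashc{m}$, $\Sdash{m}$, or $\CpR{\infty}{m}$, and recall that tempered-distribution membership of a smooth function requires only polynomial growth of the function itself (not of its derivatives), so no separate estimates on derivatives of the Fresnel factor $\exp(\alpha\bxi^2)$ are needed. All the heavy lifting is done by the real-axis specialisation of the Paley-Wiener-Schwartz bound already recorded in \sref{S3}.
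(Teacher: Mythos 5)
Your proposal is correct and follows essentially the same route as the paper: smoothness of $F(h)$ and $F_{\Text{lin}}(h)$ via the entire extensions guaranteed by the Paley--Wiener--Schwartz theorem, and temperedness for $\Re(\alpha)\leq 0$ from the polynomial growth of $\FF(\check p)$ and $\FF(w\cdot h)$ on $\mR^m$ together with $|\exp(\alpha\bxi^2)|\leq 1$. The paper merely states the algebraic-growth observation without writing out the explicit bounds you record, so your version is simply a more detailed rendering of the identical argument.
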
\vspace{1em}
{\begin{proof}
 For $h \in \Sdashc{m}$, we have  $w \cdot h \in \Sdashc{m}$ so that $\FF(\check p)$ and $\FF(w \cdot h)$ have extensions to entire functions in $\mC^m$ by \thmref{thm:PaleyW}. In particular, this implies $\FF(\check p), \FF(w \cdot h) \in \CpR{\infty}m$. Hence, the same holds true for $F(h)$ and $F_{\Text{lin}}(h)$ defined by
 \begin{eqnarray*}
  F(h)(\bxi) &=& | \FF(\check p ) ( \bxi ) \exp(\alpha \bxi^2) + \FF(w \cdot h)(\bxi) |^2  \\
  F_{\Text{lin}}(h)(\bxi) &=& | \FF(\check p ) ( \bxi ) \exp(\alpha \bxi^2) + \FF(w \cdot h)(\bxi) |^2 - |\FF(w \cdot h)(\bxi)|^2.
 \end{eqnarray*}
 For $\Re(\alpha) \leq 0$, all terms inside the moduli are of at most algebraical growth in $\mR^m$ so that $F(h)$ and $ F_{\Text{lin}}(h) $ define tempered distributions, i.e.\  $F(h), F_{\Text{lin}}(h) \in \Sdash{m}$.
\end{proof}}\vspace{1em}
The proof of our central injectivity result in \thmref{thm:1} essentially amounts to rewriting the condition $F(h)_{|U} - F(\tilde h)_{|U} = 0$ for two objects $h , \tilde h$ that coincide in their images as a linear combination of the form \eref{eq:LinComb} and to apply \lemref{lem:LinearIndependence}:
\vspace{1em}\begin{proof}[Proof of \thmref{thm:1}]
	 Well-definedness has already been shown in \lemref{lem:WellDefined}.
	 To prove injectivity, let $h, \tilde h \in \Sdashc{m}$ be such that $F(h)_{|U} = F(\tilde h)_{|U}$ or $F_{\Text{lin}}( h)_{|U} = F_{\Text{lin}}(\tilde h)_{|U}$. By expanding the squared moduli in \eref{eq:defF} and \eref{eq:defFlin}, this yields for all $\bxi \in U$
	 \begin{eqnarray}
	 0 =&&\adj{\FF(\check p )} ( { \bxi } ) \cdot \exp(\conj{\alpha} \bxi^2) \cdot \FF(w \cdot (h - \tilde h ) ) \:\, ( \bxi )  \nonumber \\
	 &+&{\FF(\check p )} \:\, ( { \bxi } )   \cdot \exp( {\alpha} \bxi^2) \cdot \adj{\FF(w \cdot (h - \tilde h ) )}( \bxi ) \label{eq:ImageDiff} \\
	 &+s\cdot& \left(\adj{ \FF(w \cdot h) } ( {\bxi}) \cdot \FF(w \cdot h)(\bxi)  -\adj{ \FF(w \cdot \tilde h) } ( {\bxi}) \cdot \FF(w \cdot  \tilde h)(\bxi) \right). \nonumber 
	 \end{eqnarray}
	 where $f^\ast(\bxi) = \conj{f(\conj{\bxi})}$ and $s \in \{0,1\}$ depends on whether $F_{\Text{lin}}$ or $F$ is considered. Since the Schwarz reflection $^\ast$ preserves analyticity and $\check p, w \cdot h, w \cdot \tilde h \in \Sdashc{m}$ have compact support by assumption, the right hand side of \eref{eq:ImageDiff} defines an entire function in $\mC^m$ according to \thmref{thm:PaleyW}. By Taylor expansion in $U\subset \mR^m$, this implies in particular that \eref{eq:ImageDiff} holds for all $\bxi = (\xi_1, \ldots, \xi_m)\in \mC^m$ where the complex extension of $\bxi^2$  is defined as $\xi_1^2 + \ldots + \xi_m^2$ (not as a Hermitean inner product).
	 
	 In order to apply the 1D theory of section \ref{S3} to the considered $m$-dimensional setting, we define for fixed but arbitrary $\bxi_0 \in \mR^{m-1}$
 	 \begin{equation} \label{eq:def_a_b}
	  \eqalign{a_{\bxi_0}(\xi ) := \FF(w \cdot h )( \xi, \bxi_0 )  \\
	   \tilde a_{\bxi_0}(\xi ) := \FF(w \cdot \tilde h )( \xi, \bxi_0 )  \\
	  b_{\bxi_0}(\xi ) := \FF(\check p ) (  \xi , \bxi_0 ) \cdot \exp(\alpha \bxi_0^2).  }
	 \end{equation}
	 Substituting \eref{eq:def_a_b} into \eref{eq:ImageDiff} yields for all $\xi \in \mC$
	 \begin{eqnarray}
	  0 =&& \adj{b_{\bxi_0}}(\xi) \cdot ( a_{\bxi_0}(\xi) - \tilde a_{\bxi_0}(\xi) ) \cdot \exp(\conj \alpha \xi^2) \nonumber \\
	 &+& {b_{\bxi_0}}(\xi) \cdot ( \adj{a_{\bxi_0}}(\xi) - \adj{\tilde a_{\bxi_0}}(\xi) ) \cdot \exp(  \alpha \xi^2) \label{eq:a_b_lincomb} \\
	 &+s\cdot&\left( \adj{a_{\bxi_0}}(\xi) \cdot  a_{\bxi_0} (\xi) - \adj{\tilde a_{\bxi_0}}(\xi) \cdot  \tilde a_{\bxi_0} (\xi) \right). \nonumber 
	 \end{eqnarray}
	 Since $a_{\bxi_0}, \tilde a_{\bxi_0}, b_{\bxi_0}: \mC \to \mC$ are entire functions of at most exponential order by \thmref{thm:PaleyW} and $\alpha \notin \mR$ holds by assumption, \eref{eq:a_b_lincomb} exactly matches the setting of \lemref{lem:LinearIndependence} with 
	  	 \begin{equation} \label{eq:def_f1_f2_f3}
	  \eqalign{f_1 := \adj{b_{\bxi_0}}\cdot ( a_{\bxi_0} - \tilde a_{\bxi_0} )  \\
		   f_2 := {b_{\bxi_0}}\cdot ( \adj{a_{\bxi_0}} - \adj{\tilde a_{\bxi_0}} )  \\
	           f_3 := s \cdot(  \adj{a_{\bxi_0}} \cdot  a_{\bxi_0} - \adj{\tilde a_{\bxi_0}} \cdot  \tilde a_{\bxi_0} ).  }
	 \end{equation}
	 and exponents $\alpha_1 := \conj \alpha $, $\alpha_2 := \alpha$, $\alpha_3 = 0$. Hence, it follows that $f_1 = f_2 = f_3 = 0$.
	 
	 As $\bxi_0 \in \mR^m$ was arbitrary, the derived result holds for all $\bxi_0 \in \mR^m$. Re-substituting the expressions in $f_1 = \adj{b_{\bxi_0}}\cdot ( a_{\bxi_0} - \tilde a_{\bxi_0} )$ according to  \eref{eq:def_a_b}, this yields
	 \begin{equation} \label{eq:result}
	\FF(\check p )^\ast \cdot \FF(w \cdot (h - \tilde h ) )    = 0.
	 \end{equation}
	 Both factors in \eref{eq:result} define entire functions in $\mC^m$. As such, they are either almost everywhere nonzero in $\mR^m$ or vanish identically. By the assumption $\check p \in \Sdashc{m} \setminus \{ 0 \}$, the first case must hold for the factor $\FF(\check p )^\ast$. Consequently, \eref{eq:result} implies $\FF(w \cdot (h - \tilde h ) ) = 0$ and thus $h = \tilde h$ by bijectivity of $\FF: \Sdash m \to \Sdash m$ and the existence of $\frac 1 w $. By generality of $h, \tilde h \in \Sdashc{m}$, this proves injectivity of the operators
	 \begin{eqnarray*} 
	 F_U &:  \Sdashc m \to \mathscr{C}^\infty ( U );\; h \mapsto F(h)_{|U} \\
	 F_{\Text{lin},U} &:  \Sdashc m \to \mathscr{C}^\infty ( U );\; h \mapsto F_{\Text{lin}}(h)_{|U}. \;\;\;\;\;\;\;\;\;\;\;\;\;\;\;\;\;\;\;\;\;\;\;\;\;\;\; \;\;\;\;\;\;\;\;\;\;\;\;\;\;\;\;\;\;\,\qedhere
	 \end{eqnarray*} 
	\end{proof}\vspace{1em}

As outlined in section \ref{S2} and stated in \cref{cor:prob1}, the obtained result is applicable to a commonly-used model of near-field phase contrast imaging with X-rays. We conclude this section with the proof of \cref{cor:prob2}, extending the uniqueness statement to \emph{tomographic} imaging of compactly supported  non-phase-wrapping specimen:
\vspace{1em}

\begin{proof}[Proof of \cref{cor:prob2}]
 By \Cref{cor:prob1}, the object transmission functions $\{O_\theta\}_{\theta \in V}$ can be uniquely reconstructed from the data. As $\exp: \mC \to \mC$ is injective on $\{ z \in \mC: -2\pi < \Im(z) \leq 0\}$, the same holds true for $\{\RR_\theta(\delta - \I \beta)\}_{\theta \in V}$ due to the relations
 \begin{equation*}
  O_\theta = \exp ( - \I k \RR_\theta(\delta - \I \beta) ) \qquad \text{and} \qquad 0 \leq k \RR(\delta) < 2 \pi.
 \end{equation*}
 The Radon transform relates to the Fourier transform via the Fourier-Slice-Theorem. See \cite{Natterer} for details. In particular, $\{\RR_\theta(\delta - \I \beta)\}_{\theta \in V}$ uniquely determines $\FF(\delta - \I \beta)$ on some wedge-shaped open subset $W_V \subset \mR^{m+1}$ which is defined by the angles in $V$. On the other hand, $\FF(\delta - \I \beta)$ is entire analytic according to \thmref{thm:PaleyW} since $\delta - \I \beta$ is compactly supported. Hence, the values on $W_V$ uniquely determine $\FF(\delta - \I \beta)$ in $\mR^{m+1}$ and are thus sufficient for the reconstruction of the unknown object.
\end{proof}

\section{Conclusions} \label{S5}

In this paper, we have proven \thmref{thm:1} showing uniqueness of phase retrieval of compactly supported objects superimposed upon a certain class of known reference signals - both for the general nonlinear problem and for a linearization valid in the weak object limit. Notably, this uniqueness is deterministic and absolute - unlike many previous results for reconstructions from plain Fourier intensity data \cite{BruckSodin1979PhaseAmbiguity2D,Hayes1982Reducible,Barakat1984,Bates1984unique2DImage,Klibanov20062DUniquePurePhase,Klibanov20141Dunique}, which hold only modulo ``trivial'' ambiguities or sets of measure zero. 

As motivated in section \ref{S2}, the obtained result is applicable to a commonly used model of holographic near-field phase contrast imaging with coherent X-rays, an emerging technique of photonic nanoscopy. In the form of \cref{cor:prob2}, we have furthermore covered the setting of phase contrast tomography, permitting the resolution of three-dimensional variations of a sample's complex refractive index. It is often tacitly assumed \cite{Nugent2007TwoPlanesPhaseVortex,Burvall2011TwoPlanes} that at least two intensity measurements at different detector distances are necessary in these imaging setups to uniquely recover both the refractive phase shifts and the absorption imprinted upon the incident radiation traversing the specimen. The present work has shown this common belief to be untrue in principle. Moreover, the proven uniqueness theorem might also have some significance to \emph{electronic} imaging methods owing to the equivalence of the dynamics of paraxial electromagnetic waves applied herein, described by \eref{eq:ParHelmholtz}, and Schr\"odinger's equation in quantum mechanics.

However, in order to evaluate the practical applicability of our single-measurement result to experimental imaging setups with common point-like micro-focus X-ray sources, it needs to be supplemented with stability estimates. A promising starting point possibly lies in the analysis of the \emph{linear} forward operator arising in the weak scattering limit. By bounding the ill-posedness of the phase retrieval problem, stability results may shed a light onto how robust image reconstruction is against noisy measurements as well as with respect to systematic inaccuracies in the physical model - as induced for instance by the finite coherence and incomplete knowledge of the probing beam in realistic imaging setups.
Likewise, it is necessary to investigate whether the derived uniqueness is stable under relaxation of the paraxial- and projection approximations made in our description of the imaging system. This may elucidate how the present work relates to recent uniqueness results \cite{Klibanov2014PhaselessUnique1,Klibanov2014PhaselessUnique2} for phaseless inverse scattering within the framework of the full Helmholtz equation. Interestingly, the theorems derived therein require intensity data for a continuous interval of wavenumbers of the coherent incident waves. At least for the tomographic point-source-illumination setup considered in \cite{Klibanov2014PhaselessUnique1}, our single-measurement result - requiring illumination only with a single coherent probe - suggests that these assumptions might be relaxed considerably.

Recent numerical results for phase contrast tomography, obtained via alternating-projection-type algorithms \cite{Ruhlandt2014} and iteratively regularized Newton-type methods \cite{MyMaster} as proposed in \cite{Hohage2013}, indicate that reconstructions of general, refracting and absorbing samples are feasible, yet severely ill-posed. 
Hence, we conclude that considerable effort has to be put into tailoring reconstruction algorithms for single-distance phase contrast imaging in order to render it a viable option in realistic experimental situations.
The uniqueness results of the present work merely provide a first theoretical proof of concept, emphasizing the necessity and significance of further algorithmic, theoretical and experimental developments in this emerging imaging technique.

\ack
The author thanks Thorsten Hohage and Tim Salditt for the encouragement to dive into the fascinating theory of phase retrieval. Moreover, gratitude goes to Aike Ruhlandt for enlightening discussions concerning uniqueness and the neat sketch in \Fref{figure1}. The author furthermore thanks three anonymous referees for their inspiring comments, which have greatly helped to improve this article.
Support by the German Research Foundation DFG through the Collaborative Research Center 755 Nanoscale
Photonic Imaging is gratefully acknowledged.

\vspace{1em}
	
\section*{References}

\bibliographystyle{abbrvIP}
\bibliography{IP-100355_literature}

\begin{thebibliography}{10}

\bibitem{Akutowicz1956I}
Akutowicz E~J.
\newblock On the determination of the phase of a {F}ourier integral, i.
\newblock {\em Transactions of the American Mathematical Society}, pages
  179--192, 1956.

\bibitem{Akutowicz1957II}
Akutowicz E~J.
\newblock On the determination of the phase of a {F}ourier integral, ii.
\newblock {\em Proceedings of the American Mathematical Society},
  8(2):234--238, 1957.

\bibitem{Barakat1984}
Barakat R and Newsam G.
\newblock {Necessary conditions for a unique solution to two-dimensional phase
  recovery}.
\newblock {\em Journal of mathematical physics}, 25(11):3190--3193, 1984.

\bibitem{Bartels2012}
Bartels M, Priebe M, Wilke R~N, Kr{\"u}ger S~P, Giewekemeyer K, Kalbfleisch S,
  Olendrowitz C, Sprung M, and Salditt T.
\newblock {Low-dose three-dimensional hard {X}-ray imaging of bacterial cells}.
\newblock {\em Optical Nanoscopy}, 1(1):1--7, 2012.

\bibitem{Barty2008ceramicfoam}
Barty A, Marchesini S, Chapman H, Cui C, Howells M, Shapiro D, Minor A, Spence
  J, Weierstall U, Ilavsky J, et~al.
\newblock Three-dimensional coherent {X}-ray diffraction imaging of a ceramic
  nanofoam: Determination of structural deformation mechanisms.
\newblock {\em Physical review letters}, 101(5):055501, 2008.

\bibitem{Bates1984unique2DImage}
Bates R.
\newblock Uniqueness of solutions to two-dimensional {F}ourier phase problems
  for localized and positive images.
\newblock {\em Computer vision, graphics, and image processing},
  25(2):205--217, 1984.

\bibitem{BoasEntire}
Boas R~P.
\newblock {\em Entire functions}, volume~1.
\newblock Academic Press Inc., New York, 1954.

\bibitem{Bonse1965interferometric}
Bonse U and Hart M.
\newblock An {X}-ray interferometer.
\newblock {\em Applied Physics Letters}, 6(8):155--156, 1965.

\bibitem{BruckSodin1979PhaseAmbiguity2D}
Bruck Y~M and Sodin L.
\newblock On the ambiguity of the image reconstruction problem.
\newblock {\em Optics Communications}, 30(3):304--308, 1979.

\bibitem{Burvall2011TwoPlanes}
Burvall A, Lundstr{\"o}m U, Takman P~A, Larsson D~H, and Hertz H~M.
\newblock Phase retrieval in {X}-ray phase-contrast imaging suitable for
  tomography.
\newblock {\em Optics express}, 19(11):10359--10376, 2011.

\bibitem{Chapman2006}
Chapman H~N, Barty A, Marchesini S, Noy A, Hau-Riege S~P, Cui C, Howells M~R,
  Rosen R, He H, Spence J~C, et~al.
\newblock {High-resolution ab initio three-dimensional {X}-ray diffraction
  microscopy}.
\newblock {\em JOSA A}, 23(5):1179--1200, 2006.

\bibitem{Cloetens1999}
Cloetens P, Ludwig W, Baruchel J, {Van Dyck} D, {Van Landuyt} J, Guigay J, and
  Schlenker M.
\newblock {Holotomography: Quantitative phase tomography with micrometer
  resolution using hard synchrotron radiation {X}-rays}.
\newblock {\em Applied Physics Letters}, 75(19):2912--2914, 1999.

\bibitem{ConwayOneComplex1}
Conway J~B and Conway J~B.
\newblock {\em Functions of one complex variable}, volume~2.
\newblock Springer, 1973.

\bibitem{Fienup1986algorithm}
Fienup J and Wackerman C.
\newblock Phase-retrieval stagnation problems and solutions.
\newblock {\em JOSA A}, 3(11):1897--1907, 1986.

\bibitem{Fienup19782DPhaseRetrFeasible}
Fienup J~R.
\newblock Reconstruction of an object from the modulus of its {F}ourier
  transform.
\newblock {\em Optics letters}, 3(1):27--29, 1978.

\bibitem{FreilingEntire}
Freiling G and Yurko V.
\newblock {\em Introduction to the theory of entire functions}.
\newblock Schriftenreihe des Instituts f\"ur Mathematik der
  Universit\"at-Duisburg-Essen, Duisburg, 2003.

\bibitem{Guigay1977CTF}
Guigay J.
\newblock {F}ourier-transform analysis of {F}resnel diffraction patterns and
  in-line holograms.
\newblock {\em Optik}, 49(1):121--125, 1977.

\bibitem{Gureyev1999noninterferometric}
Gureyev T, Raven C, Snigirev A, Snigireva I, and Wilkins S.
\newblock Hard {X}-ray quantitative non-interferometric phase-contrast
  microscopy.
\newblock {\em Journal of Physics D: Applied Physics}, 32(5):563, 1999.

\bibitem{Hagemann2014EmptyBeam}
Hagemann J, Robisch A~L, Luke D, Homann C, Hohage T, Cloetens P, Suhonen H, and
  Salditt T.
\newblock Reconstruction of wave front and object for inline holography from a
  set of detection planes.
\newblock {\em Optics Express}, 22(10):11552--11569, 2014.

\bibitem{Hayes1982Reducible}
Hayes M~H and McClellan J~H.
\newblock Reducible polynomials in more than one variable.
\newblock {\em Proceedings of the IEEE}, 70(2):197--198, 1982.

\bibitem{Henke1993TypicalDeltaBeta}
Henke B~L, Gullikson E~M, and Davis J~C.
\newblock {X}-ray interactions: Photoabsorption, scattering, transmission, and
  reflection at {E=50-30\,000\,eV}, {Z=1-92}.
\newblock {\em Atomic data and nuclear data tables}, 54(2):181--342, 1993.

\bibitem{Hohage2013}
Hohage T and Werner F.
\newblock {Iteratively regularized {N}ewton-type methods for general data
  misfit functionals and applications to Poisson data}.
\newblock {\em Numerische Mathematik}, 123(4):745--779, 2013.

\bibitem{Hoermander}
H{\"o}rmander L.
\newblock {\em {The analysis of linear partial differential operators I}}.
\newblock Springer, Berlin, 2003.

\bibitem{Jonas2004TwoMeasUniquePhaseRetr}
Jonas P and Louis A.
\newblock Phase contrast tomography using holographic measurements.
\newblock {\em Inverse Problems}, 20(1):75, 2004.

\bibitem{Louis2013Cone}
Jonas P and Louis A.
\newblock Cone beam geometry for small objects in phase contrast tomography.
\newblock {\em Inverse Problems}, 29(9):095013, 2013.

\bibitem{Klibanov1986FresnelUnique}
Klibanov M.
\newblock Determination of a function with compact support from the absolute
  value of its {F}ourier transform, and an inverse scattering problem.
\newblock {\em Differential Equations}, 22(10):1232--1240, 1986.

\bibitem{Klibanov20062DUniquePurePhase}
Klibanov M~V.
\newblock On the recovery of a 2-d function from the modulus of its {F}ourier
  transform.
\newblock {\em Journal of mathematical analysis and applications},
  323(2):818--843, 2006.

\bibitem{Klibanov2014PhaselessUnique1}
Klibanov M~V.
\newblock Phaseless inverse scattering problems in three dimensions.
\newblock {\em SIAM Journal on Applied Mathematics}, 74(2):392--410, 2014.

\bibitem{Klibanov2014PhaselessUnique2}
Klibanov M~V.
\newblock Uniqueness of two phaseless non-overdetermined inverse acoustics
  problems in 3-d.
\newblock {\em Applicable Analysis}, 93(6):1135--1149, 2014.

\bibitem{Klibanov20141Dunique}
Klibanov M~V and Kamburg V~G.
\newblock Uniqueness of a one-dimensional phase retrieval problem.
\newblock {\em Inverse Problems}, 30(7):75004--75013, 2014.

\bibitem{Klibanov1995}
Klibanov M~V, Sacks P~E, and Tikhonravov A~V.
\newblock {The phase retrieval problem}.
\newblock {\em Inverse problems}, 11(1):1, 1995.

\bibitem{Krenkel2014BCAandCTF}
Krenkel M, T{\"o}pperwien M, Bartels M, Lingor P, Schild D, and Salditt T.
\newblock {X}-ray phase contrast tomography from whole organ down to single
  cells.
\newblock {\em SPIE Proceedings}, (9210):92120R, 2014.

\bibitem{Marchesini2003GoldSpFarfield}
Marchesini S, Chapman H, Hau-Riege S, London R, Szoke A, He H, Howells M,
  Padmore H, Rosen R, Spence J, et~al.
\newblock Coherent {X}-ray diffractive imaging: applications and limitations.
\newblock {\em Optics Express}, 11(19):2344--2353, 2003.

\bibitem{Marchesini2003PhysRevB}
Marchesini S, He H, Chapman H~N, Hau-Riege S~P, Noy A, Howells M~R, Weierstall
  U, and Spence J~C.
\newblock {X}-ray image reconstruction from a diffraction pattern alone.
\newblock {\em Physical Review B}, 68(14):140101, 2003.

\bibitem{MyMaster}
Maretzke S.
\newblock Regularized newton methods for simultaneous {R}adon inversion and
  phase retrieval in phase contrast tomography.
\newblock {\em arXiv preprint arXiv:1502.05073}, 2015.

\bibitem{Mayo2002quantitative}
Mayo S, Miller P, Wilkins S, Davis T, Gao D, Gureyev T, Paganin D, Parry D,
  Pogany A, and Stevenson A.
\newblock Quantitative {X}-ray projection microscopy: phase-contrast and
  multi-spectral imaging.
\newblock {\em Journal of microscopy}, 207(2):79--96, 2002.

\bibitem{MiaoNature1999CDIFirstExp}
Miao J, Charalambous P, Kirz J, and Sayre D.
\newblock Extending the methodology of {X}-ray crystallography to allow imaging
  of micrometre-sized non-crystalline specimens.
\newblock {\em Nature}, 400(6742):342--344, 1999.

\bibitem{Miao2003EColiFarfield}
Miao J, Hodgson K~O, Ishikawa T, Larabell C~A, LeGros M~A, and Nishino Y.
\newblock Imaging whole escherichia coli bacteria by using single-particle
  {X}-ray diffraction.
\newblock {\em Proceedings of the National Academy of Sciences},
  100(1):110--112, 2003.

\bibitem{Millane1990}
Millane R.
\newblock {Phase retrieval in crystallography and optics}.
\newblock {\em JOSA A}, 7(3):394--411, 1990.

\bibitem{Momose1995interferometric}
Momose A, Takeda T, and Itai Y.
\newblock Phase-contrast {X}-ray computed tomography for observing biological
  specimens and organic materials.
\newblock {\em Review of scientific instruments}, 66(2):1434--1436, 1995.

\bibitem{Natterer}
Natterer F.
\newblock {\em {The mathematics of computerized tomography}}, volume~32 of {\em
  {Classics of Applied Mathematics}}.
\newblock Society for Industrial and Applied Mathematics, 2001.

\bibitem{Nugent1996Propagationbased}
Nugent K, Gureyev T, Cookson D, Paganin D, and Barnea Z.
\newblock Quantitative phase imaging using hard {X}-rays.
\newblock {\em Physical review letters}, 77(14):2961, 1996.

\bibitem{Nugent2007TwoPlanesPhaseVortex}
Nugent K~A.
\newblock {X}-ray noninterferometric phase imaging: a unified picture.
\newblock {\em JOSA A}, 24(2):536--547, 2007.

\bibitem{Nugent2010coherent}
Nugent K~A.
\newblock Coherent methods in the {X}-ray sciences.
\newblock {\em Advances in Physics}, 59(1):1--99, 2010.

\bibitem{PaganinXRay}
Paganin D.
\newblock {\em {Coherent {X}-ray optics}}, volume~1.
\newblock Oxford University Press Oxford, 2006.

\bibitem{Paganin2002simultaneous}
Paganin D, Mayo S, Gureyev T~E, Miller P~R, and Wilkins S~W.
\newblock Simultaneous phase and amplitude extraction from a single defocused
  image of a homogeneous object.
\newblock {\em Journal of microscopy}, 206(1):33--40, 2002.

\bibitem{Paganin1998noninterferometric}
Paganin D and Nugent K~A.
\newblock Noninterferometric phase imaging with partially coherent light.
\newblock {\em Physical review letters}, 80(12):2586, 1998.

\bibitem{Papoulis1968FresnelScale}
Papoulis A.
\newblock Systems and transforms with applications in optics.
\newblock {\em McGraw-Hill Series in System Science, Malabar: Krieger, 1968},
  1, 1968.

\bibitem{Pogany1997noninterferometric}
Pogany A, Gao D, and Wilkins S.
\newblock Contrast and resolution in imaging with a microfocus {X}-ray source.
\newblock {\em Review of Scientific Instruments}, 68(7):2774--2782, 1997.

\bibitem{Teague1983TIE}
Reed~Teague M.
\newblock Deterministic phase retrieval: a {G}reen’s function solution.
\newblock {\em JOSA}, 73(11):1434--1441, 1983.

\bibitem{Ruhlandt2014}
Ruhlandt A, Krenkel M, Bartels M, and Salditt T.
\newblock Three-dimensional phase retrieval in propagation-based phase-contrast
  imaging.
\newblock {\em Physical Review A}, 89(3):033847, 2014.

\bibitem{Teich1991Photonics}
Teich M~C and Saleh B.
\newblock {\em Fundamentals of photonics}.
\newblock John Wiley \& Sons, New York, 1991.

\bibitem{Thibault2009ProbeObjectFct}
Thibault P, Dierolf M, Kewish C~M, Menzel A, Bunk O, and Pfeiffer F.
\newblock Contrast mechanisms in scanning transmission {X}-ray microscopy.
\newblock {\em Physical Review A}, 80(4):043813, 2009.

\bibitem{Thibault2006YeastCell}
Thibault P, Elser V, Jacobsen C, Shapiro D, and Sayre D.
\newblock Reconstruction of a yeast cell from {X}-ray diffraction data.
\newblock {\em Acta Crystallographica Section A: Foundations of
  Crystallography}, 62(4):248--261, 2006.

\bibitem{Turner2004FormulaWeakAbsSlowlyVarPhase}
Turner L, Dhal B, Hayes J, Mancuso A, Nugent K, Paterson D, Scholten R, Tran C,
  and Peele A.
\newblock {X}-ray phase imaging: Demonstration of extended conditions for
  homogeneous objects.
\newblock {\em Optics express}, 12(13):2960--2965, 2004.

\bibitem{Walther1963}
Walther A.
\newblock {The question of phase retrieval in optics}.
\newblock {\em Journal of Modern Optics}, 10(1):41--49, 1963.

\bibitem{Wilkins1996interferometric}
Wilkins S, Gureyev T, Gao D, Pogany A, and Stevenson A.
\newblock Phase-contrast imaging using polychromatic hard {X}-rays.
\newblock {\em Nature}, 384(6607):335--338, 1996.

\end{thebibliography}

\end{document}